\newtheorem{theorem}{Theorem}
\newtheorem{lemma}{Lemma}
\newtheorem{remark}{Remark}
\newtheorem{proposition}{Proposition}
\newtheorem{definition}{Definition}
\DeclareMathOperator{\Div}{div}
\renewcommand{\div}{\Div}
\newcommand{\RR}{\mathbb{R}}
\DeclareMathOperator*{\argmin}{arg\,min}
\DeclareMathOperator*{\argmax}{arg\,max}
\newcommand{\radon}{\mathcal{M}}
\newcommand{\wrightarrow}{\rightharpoonup}
\begin{document}

\title{A Wasserstein distance and total variation regularized model to image reconstruction problems
}

\author{
	\normalsize
	Yiming Gao\footnotemark[1] }

\renewcommand{\thefootnote}{\fnsymbol{footnote}}

\footnotetext[1]{School of Mathematics,  Nanjing University of Aeronautics and Astronautics, Nanjing 210016, China, Email:gaoyiming@nuaa.edu.cn}

\date{}
\maketitle

\begin{abstract}
	Optimal transport has gained much attention in image processing field, such as computer vision, image interpolation and medical image registration. Recently, Bredies et al. (ESAIM: M2AN 54:2351–2382, 2020) and Schmitzer et al. (IEEE T MED IMAGING 39:1626-1635, 2019) established the framework of optimal transport regularization for dynamic inverse problems. In this paper, we incorporate Wasserstein distance, together with total variation, into static inverse problems as a prior regularization. The Wasserstein distance formulated by Benamou-Brenier energy measures the similarity between the given template and the reconstructed image. Also, we analyze the existence of solutions of such variational problem in Radon measure space. Moreover, the first-order primal-dual algorithm is constructed for solving this general imaging problem in a specific grid strategy. Finally, numerical experiments for undersampled MRI reconstruction are presented which show that our proposed model can recover images well with high quality and structure preservation.
\end{abstract}

\paragraph{Mathematics subject classification:}
94A08, 
68U10, 
26A45, 
90C90. 

\paragraph{Keywords:}
Wasserstein prior, optimal transport, inverse problems, regularization method, total variation, MRI reconstruction

\section{Introduction}
The aim of this paper is to find a Radon measure $\nu\in\mathcal{M}(\overline{\Omega})$ on a closed domain $\overline{\Omega}\subset \RR^d$, such that
\begin{equation}\label{ip}
K\nu = f,
\end{equation}
where $f$, belonging to some Hilbert space $H(\overline{\Omega})$, is some given (noisy) data, and $K:\mathcal{M}(\overline{\Omega})\to H(\overline{\Omega})$ is a linear and weak$^\ast$-to-weak continuous observation operator. 
In general, the problem \eqref{ip} is ill-posed and always needs to be regularized  as a variational model \cite{engl1996regularization,scherzer2009variational}, reading as 
\begin{equation}\label{reg_model}
\min_{\nu\in \mathcal{M}(\overline{\Omega})} \frac{\alpha}{2}\|K\nu-f\|^2_{H}+R(\nu),
\end{equation}
where $\alpha>0$ is a parameter. Note that $R$ is the regularization term restricting the characteristic of solutions. For mathematical imaging problems, the design of regularizers has gained much attention in the last decades, which is a well-established technique, such as, for instance, total variation (TV) \cite{ROF}, total generalized variation (TGV) \cite{TGV}, oscillation TGV \cite{gao2018infimal} and framelet-based regularizer \cite{framelet,cai2012image} and so on. In this paper, we incorporate some structure priors, encoded by a given measure $\mu$, for $\nu$ though optimal transport theory. In particular, the regularizer $R$ is defined by
\begin{equation}
R(\nu):=\beta\|\nabla \nu\|_{\mathcal{M}}+W_2(\mu,\nu),
\end{equation}
with $\beta>0$. The first term appearing in $R$ is the standard TV, which enforces the regularity on $\nu$. The second term $W_2$ is the 2-Wasserstein distance between $\nu$ and the given prior $\mu$. This forces $\nu$ to inherit some structural features from $\mu$.  In general, the Wasserstein distance $W_2$, as shown in \cite{benamou2000computational}, can be equivalently formulated as 
\begin{equation}\label{wasser2}
W_2(\mu,\nu)=\min_{\rho\in \mathcal{M}([0,1]\times\overline{\Omega}),m\in \mathcal{M}([0,1]\times\overline{\Omega};\RR^d)} B_{\mu,\nu}(\rho,m),
\end{equation}
where
\begin{equation*}
B_{\mu,\nu}(\rho,m):=\frac{1}{2}\int_0^1\int_{\overline{\Omega}} \Big|\frac{dm}{d\rho}\Big|^2d\rho, \quad \text{s.t. $\partial_t\rho+\div m=0$, $\rho_0=\mu$, $\rho_1=\nu$}.
\end{equation*}
Note that the above formula employs the fact that $\rho$ and $m$ can be disintegrated into $\rho=dt\otimes \rho_t$, $m=dt\otimes m_t$. This formulation is the classical dynamic optimal transport, called Benamou-Brenier energy. Accordingly, our minimization problem \eqref{reg_model} is equivalent to 
\begin{equation}\label{proMode}
\begin{aligned}
\min_{\rho\in \mathcal{M}([0,1]\times\overline{\Omega}),m\in \mathcal{M}([0,1]\times\overline{\Omega};\RR^d)} &\frac{\alpha}{2}\|K\rho_1-f\|^2_{H}+ \beta\|\nabla \rho_1\|_{\mathcal{M}}+B_{\mu,\rho_1}(\rho,m),\\
&\text{s.t. $\partial_t\rho+\div m=0$, $\rho_0=\mu$.}
\end{aligned}
\end{equation}
Note that the measure $\mu$ (the initial state) is given, which can be regarded as a template for our reconstructed measure $\rho_1$ and can provide some prior information. The Benamou-Brenier energy term $B_{\mu,\rho_1}$ is expected to minimize the Wasserstein  distance between this given template and the reconstruction. In medical imaging, ones can generate an atlas from a specific organ dataset, e.g., brain images, then this atlas can be regarded as a template. Moreover, for dynamic cardiac MRI reconstruction, there exists elastic deformation between two adjacent frames due to the heart beating, therefore, the former frame image can service as a template for the latter one.

In recent years, template based models have gained much attention in inverse problems, that can incorporate prior knowledge into regularization functional \cite{oektem2017shape,chen2019new,karlsson2017generalized,benamou2015iterative}. If a template image is known to be close in some sense to the reconstruction, we can define a regularity term to measure the closedness of them. In \cite{karlsson2017generalized,benamou2015iterative}, the authors employed the entropic Wasserstein distance based on Kantorovich formulation to penalize the template prior information. In \cite{oektem2017shape,chen2018indirect,gris2019incorporation}, Chen et al. introduced large deformation diffeomorphic metric mapping (LDDMM) framework to model large deformations between the template and the target by diffeomorphism flows, which is a shape-regularized method with topology preserving. Later, Gris et al. generalized this framework by the metamorphosis in \cite{gris2020image}, which allows greyscale value change except for the diffeomorphic deformation. Note that the above LDDMM based methods can preserve the topology by restricting the velocity in an admissible Hilbert space.  Our proposed optimal transport based model \eqref{proMode} does not possess this topology-preserving property, while holding the mass-conservation due to the continuity equation. 

Returning to optimal transport, we begin to review some literatures, which has gained much interest in computational and applicable fields, such as machine learning \cite{WGAN2017improved,Wasserstein2017} inverse problem \cite{bredies2020optimal,Bredies2021,karlsson2017generalized},  image processing \cite{ni2009local,papadakis2015multi,maas2015generalized,maas2016generalized,metivier2016optimal,fitschen2015dynamic}, computer vision \cite{rubner2000earth,sadeghian2015automatic,haker2004optimal} and others. The original optimal transport problem was proposed by Monge in 1781 \cite{monge1781memoire}. Given two probability measures $\mu\in\mathcal{P}(X)$ and $\nu\in\mathcal{P}(Y)$ on metric spaces $X$ and $Y$, respectively, of the same mass, one seeks the transport map $T: X\to Y$, such that
\begin{equation}\label{Monge}
\inf_T  \left\{\int_X C(x,T(x))d\mu: \quad T_\#\mu=\nu\right\},
\end{equation}
where $C: X\times Y\to [0,+\infty]$ is the cost function. This problem exhibits several difficulties, one of which is the non-convexity.  In \cite{Kantorovich}, Kantorovich proposed a relaxed formulation of
\begin{equation}\label{Kantorovich}
\inf_\gamma \left\{ \int_{X\times Y} C(x,y)d\gamma: \gamma\in\Pi(\mu,\nu) \right\},
\end{equation}
where $\Pi(\mu,\nu):=\{\gamma\in\mathcal{P}(X,Y):(\pi_x)_\# \gamma=\mu, (\pi_y)_\# \gamma=\nu\}$ is the set of so-called transport plans. Notice that the Kantorovich problem is convex and well-structured, and in the discrete setting, it becomes a linear programming problem. This can be solved efficiently using Sinkhorn iterations \cite{sinkhorn1964relationship,sinkhorn2013} by incorporating an entropic barrier term. Recently, some other regularized transport approaches based on Kullback-Leibler divergence have been proposed \cite{benamou2015iterative,peyre2015entropic}, that can be computed by iterative Bregman projection and Dykstra's algorithm. The  well-known dynamic optimal transport was proposed in \cite{benamou2000computational}, which, equivalent to 2-Wasserstein distance as mentioned in \eqref{wasser2}. That, and its variants appearing subsequently, have been applied for image processing and computer vision tasks \cite{papadakis2014optimal,chizat2018interpolating,papadakis2015multi}. Also, the development of efficient algorithms to
the dynamic optimal transport has attracted much attention. Papadakis et al. \cite{papadakis2014optimal} designed proximal splitting methods including Douglas-Rachford splitting method and primal-dual method to solve the problem. Both of them need a projection onto the divergence-free constraint at each iteration, which amounts to solving a 3D Poisson equation for 2D image. To avoid this projection, \cite{henry2019primal}  introduced the Helmholtz-Hodge decomposition to get rid of this constraint. Moreover, for the Wasserstein-1 distance problem, the multilevel primal-dual algorithm was discussed in \cite{liu2021multilevel}.

Recently, the dynamic optimal transport regularization has been established to solve inverse problems \cite{bredies2020optimal,schmitzer2019dynamic}. In \cite{bredies2020optimal}, Bredies et al. proposed and studied the 
Wasserstein-Fisher-Rao energy for dynamic undersampled MRI reconstruction, and they also showed the well-posedness of the associated regularization model. This is a new functional-analytic framework for recovering curves of Radon measures from continuously acquired measurements, and heavily contributes to the theoretical analysis of our paper, e.g., existence of solutions.  In particular, the extremal points of the unit ball corresponding to optimal transport energies were characterized in \cite{Bredies2021,bredies2022superposition}, and their sparse solutions to the dynamic inverse problems were also analyzed. Meanwhile, the generalizations of conditional gradient method to solve the above optimal transport regularized problems were developed in \cite{bredies2022generalized,bredies2023asymptotic,duval2021dynamical}. In this paper, we aim to address the dynamic optimal transport to static inverse problems, and the main contributions of our work are listed as follows.

\begin{itemize}
	\item Firstly, we establish a variational model based on Benamou-Brenier energy (Wasserstein distance) and total variation for static inverse problems with a known template. The template for the reconstruction is naturally assigned as the initial state of continuity equation.
	\item Secondly, we analyze the existence of solutions to our variational problem in Radon measure space. Also,  we propose the first-order primal-dual method to solve our problem in discrete setting and further introduce staggered grids strategy for the continuity equation. Numerical experiments show that our model can reconstruct images well under low sampling rate for MRI reconstruction.
\end{itemize}

The rest of this paper is organized as follows. Section 2 provides some preliminaries and some properties of Benamou-Brenier energy. We exhibit the proposed Wasserstein prior and total variation based model in Section 3, and, in particular, the existence of minimizers is shown. Section 4 is devoted to the numerical algorithm for our model and grid discretization. Numerical experiments for undersampled MRI reconstruction are addressed in Section 5. Finally, we draw conclusions in Section 6.

\section{Preliminaries}

Before going to establish our proposed model, we begin with some basic notations about the Radon measure theory, which are mainly from \cite{ambrosio2000functions}. Let $X$ be locally compact and separable metric space and $\mathcal{B}(X)$ be the Borel $\sigma$-algebra on $X$. Denote $\mu:\mathcal{B}(X)\to\RR^d$ by an $\RR^d$-valued measure and its total variation measure  by $|\mu|$. We call $\mu$ a finite Radon measure if $|\mu|(X)<\infty$, and the corresponding space is denoted by $\mathcal{M}(X,\RR^d)$ (and $\mathcal{M}(X)$ if $d=1$). Notice that $(C_0(X,\RR^d))^\ast=\mathcal{M}(X,\RR^d)$,
where
\begin{equation*}
C_0(X,\RR^d)=\overline{\{u\in C(X,\RR^d):\text{supp $u$} \subset\subset X \}}.
\end{equation*}
In addition, $\mathcal{M}(X,\RR^d)$ is a Banach space under the induced Radon norm
\begin{equation*}
\|\mu\|_\mathcal{M}=\sup\left\{\langle\mu,\phi\rangle:\phi\in C_0(X,\RR^d), \|\phi\|_\infty\leq 1 \right\}.
\end{equation*}
We mention that the Radon norm is also written as $|\mu|(X)$ in the literature; see, for instance, \cite[Proposition 1.47]{ambrosio2000functions}. We say that $\{\mu_n\}$ in $\mathcal{M}(X,\RR^d)$ converges weakly$^\ast$ to $\mu$ if 
\begin{equation*}
\sum_{i=1}^d\int_X u_id\mu_i^n\to\sum_{i=1}^d\int_X u_id\mu_i, \quad \forall u\in C_0(X,\RR^d).
\end{equation*}
Notice that the mapping $\mu \mapsto\|\mu\|_\mathcal{M}$ is lower semi-continuous in this weak$^\ast$ sense. Moreover, the space $\mathcal{M}(X,\RR^d)$ is compact, i.e., every bounded sequence in it has a weak$^\ast$ convergent subsequence  \cite{ambrosio2000functions}.

In the following, we let  $\Omega\subset\RR^d$ be
an open and bounded domain, and consider a time variable $t\in T:=[0, 1]$. Then we set $X:=T\times\overline{\Omega}$ to be the time-space cylinder. Let $\radon=\radon(X)\times\radon(X,\RR^d)$ and $\radon^+(X)$ be the nonnegative measure space. To simplify the following description, we define the convex set as
\begin{equation*}\label{E_defi}
E:=\{(a,b)\in\RR\times\RR^d:a+\frac{1}{2}|b|^2\leq0\},
\end{equation*}
and the corresponding indicator function as
\begin{equation*}
\delta_E(a,b) = 
\begin{cases}
0,&\text{if $(a,b)\in E$;}\\
+\infty, & \text{otherwise.}
\end{cases}
\end{equation*}
Also, we introduce the map $\Psi:\RR\times\RR^d \to [0,+\infty]$ of
\begin{equation*}
\Psi(t,x):=\begin{cases}
\frac{|x|^2}{2t}, &\text{if $t>0$};\\
0, & \text{if $t=|x|=0$};\\
+\infty, &\text{otherwise.}
\end{cases}
\end{equation*}
Consequently, $\Psi$ is the Legendre conjugate of $\delta_E$, as shown in the following Lemma. The proof is omitted, and we refer the interested
reader to \cite[Lemma 5.17]{santambrogio2015optimal}.


\begin{lemma}
	Let $(a,b)\in\RR\times\RR^d$. Then for $(t,x)\in\RR\times\RR^d$, we have
	\begin{equation}
	\sup_{(a,b)\in E} (at+b\cdot x) = \Psi(t,x).
	\end{equation}
	In particular, $\Psi$ is convex, lower semicontinuous and 1-homogeneous.
\end{lemma}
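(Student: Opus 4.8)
The plan is to compute the supremum directly, by case analysis on the sign of $t$, and then to read off the structural properties (convexity, lower semicontinuity, positive $1$-homogeneity) from the fact that $\Psi$ is exhibited as a pointwise supremum of \emph{linear} functionals of $(t,x)$. The key structural observation is that the objective $at+b\cdot x$ is affine in $a$, and every feasible pair satisfies $a\le -\tfrac12|b|^2$ with $a$ otherwise unconstrained from below; hence the optimal choice of $a$ is dictated entirely by the sign of $t$, which splits the analysis into three transparent regimes.

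First I would treat $t>0$. Here $a\mapsto at$ is increasing, so for each fixed $b$ the constraint should be saturated, $a=-\tfrac12|b|^2$, reducing the problem to $\sup_{b\in\RR^d}\bigl(-\tfrac{t}{2}|b|^2+b\cdot x\bigr)$. This is a strictly concave quadratic in $b$ whose maximizer is $b=x/t$; substituting yields the value $\tfrac{|x|^2}{2t}$, matching $\Psi$. For $t<0$, I would fix any feasible $b$ and let $a\to-\infty$, which keeps $(a,b)\in E$ while driving $at\to+\infty$, so the supremum is $+\infty$ as required. For $t=0$ the objective collapses to $b\cdot x$, and any $b$ is feasible once $a\le-\tfrac12|b|^2$; thus the supremum equals $\sup_{b}b\cdot x$, which is $0$ when $x=0$ and $+\infty$ when $x\neq0$ (take $b=\lambda x$, $\lambda\to+\infty$). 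In each regime the computed value coincides with the stated definition of $\Psi$.

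For the concluding assertions, I would argue abstractly rather than by inspection of the formula: each map $(t,x)\mapsto at+b\cdot x$ with $(a,b)\in E$ is linear, hence convex, continuous, and positively $1$-homogeneous. Since $\Psi$ is their pointwise supremum over the \emph{fixed} index set $E$, it is automatically convex and lower semicontinuous, and it inherits positive $1$-homogeneity because for $\lambda>0$ the factor $\lambda$ factors out of the supremum, giving $\Psi(\lambda t,\lambda x)=\lambda\,\Psi(t,x)$.

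I expect the only delicate point to be the degenerate direction $t=0$: one must distinguish $x=0$ from $x\neq0$ and confirm that the convention $\Psi(0,0)=0$ is indeed consistent with the supremum (the sup of $b\cdot 0$ over $b$ is genuinely $0$, attained rather than approached). The remaining steps are routine optimization, and the abstract sup-of-linear argument for the structural properties avoids any separate verification of convexity or homogeneity from the piecewise formula.
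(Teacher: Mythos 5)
Your proof is correct and complete: the three-way case analysis on the sign of $t$ (saturating the constraint $a=-\tfrac12|b|^2$ when $t>0$, sending $a\to-\infty$ when $t<0$, and splitting $x=0$ from $x\neq 0$ when $t=0$) reproduces exactly the piecewise definition of $\Psi$, and the sup-of-linear-functionals argument cleanly yields convexity, lower semicontinuity and positive $1$-homogeneity. The paper itself omits the proof, deferring to Lemma 5.17 of Santambrogio's book; your argument is the standard one given there, so there is nothing to compare beyond noting that you have supplied the details the paper chose to leave out.
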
	
By the property of $\Psi$, we now give the dual representation of the Benamou-Brenier energy, and some properties of the energy $B$ are summarized; see, for instance, \cite{bredies2020optimal,Bredies2021,santambrogio2015optimal} for more details.
\begin{definition}
	Let $(\rho,m)\in\mathcal{M}$. We define 
	\begin{equation}\label{BBdual}
	B(\rho,m):=\sup\left\{\int_Xad\rho+\int_Xb\cdot dm :(a,b)\in C_0(X;E)\right\}.
	\end{equation}
\end{definition} 

\begin{proposition}\label{pro:BB}
	The functional $B$ defined in \eqref{BBdual} is convex and lower semicontinuous for the	weak$^\ast$ convergence. Moreover, it satisfies the following properties: 
	\begin{enumerate}[(i)]
		\item $B(\rho,m)\geq 0$ for all $(\rho,m)\in\mathcal{M}$;
		\item assume that $\rho,m\ll\lambda$, for some $\lambda\in\mathcal{M}^+(X)$. Then
		\begin{equation*}
		B(\rho,m) = \int_X \Psi(\frac{d\rho}{d\lambda},\frac{dm}{d\lambda})d\lambda=\frac{1}{2}\int_X |\frac{d m}{d\rho}|^2d\rho;
		\end{equation*}  
		\item if $B(\rho,m)<+\infty$, then $\rho\geq0$ and $m\ll\rho$;
		\item if $\rho\geq0$ and $m\ll\rho$, then $m=\rho v$ for a measurable map $v:X\to \RR^d$ and 
		\begin{equation*}
		B(\rho,m) = \int_X \Psi(1,v)d\rho=\frac{1}{2}\int_X |v|^2d\rho.
		\end{equation*}
	\end{enumerate}
\end{proposition}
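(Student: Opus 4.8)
The plan is to exploit that $B$ is, by its very definition \eqref{BBdual}, a pointwise supremum of the affine maps $(\rho,m)\mapsto \int_X a\,d\rho + \int_X b\cdot d m$ indexed by $(a,b)\in C_0(X;E)$. Each such map is weak$^\ast$-continuous, since $(a,b)$ is an admissible test field in $C_0(X;\RR\times\RR^d)$; being linear it is in particular convex and weak$^\ast$-lower semicontinuous. As the supremum of a family of convex, weak$^\ast$-lsc functions is again convex and weak$^\ast$-lsc, the first assertion follows immediately. For (i), I note that $(0,0)\in E$ (since $0+\tfrac12|0|^2\le 0$), so the constant field $(a,b)\equiv(0,0)$ is admissible and already yields the value $0$; hence $B(\rho,m)\ge 0$.

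The heart of the statement is the integral representation (ii), which I would prove by two inequalities, using the dominating measure $\lambda$ and the densities $\tfrac{d\rho}{d\lambda},\tfrac{dm}{d\lambda}$. For the inequality ``$\le$'', fix any admissible $(a,b)\in C_0(X;E)$. Since $(a(p),b(p))\in E$ pointwise, the preceding Lemma gives the pointwise bound $a\,\tfrac{d\rho}{d\lambda}+b\cdot\tfrac{dm}{d\lambda}\le \Psi\big(\tfrac{d\rho}{d\lambda},\tfrac{dm}{d\lambda}\big)$ $\lambda$-a.e.; integrating against $\lambda$ and taking the supremum over $(a,b)$ yields $B(\rho,m)\le \int_X \Psi\,d\lambda$. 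The reverse inequality ``$\ge$'' is the delicate part: here I would use that, by the preceding Lemma, $\Psi\big(\tfrac{d\rho}{d\lambda},\tfrac{dm}{d\lambda}\big)$ is exactly the pointwise supremum over $(a,b)\in E$, attained (where $\tfrac{d\rho}{d\lambda}>0$) at the explicit maximizer $b^\star=\tfrac{dm/d\lambda}{d\rho/d\lambda}$, $a^\star=-\tfrac12|b^\star|^2$. Selecting these maximizers measurably, truncating to $\{|b^\star|\le n\}$, and approximating the resulting bounded measurable fields by continuous, compactly supported fields valued in the closed convex set $E$ (via Lusin's theorem together with a projection onto $E$), I would recover $\int_X\Psi\,d\lambda$ in the limit by monotone/dominated convergence. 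The $1$-homogeneity of $\Psi$ guarantees that the value $\int_X\Psi(\tfrac{d\rho}{d\lambda},\tfrac{dm}{d\lambda})\,d\lambda$ does not depend on the choice of dominating $\lambda$, so the representation is well-defined; the final equality $\int_X\Psi\,d\lambda=\tfrac12\int_X|\tfrac{dm}{d\rho}|^2\,d\rho$ then comes from specializing $\lambda=\rho$.

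For (iii), I would argue by contraposition using admissible test fields. To force $\rho\ge 0$, take $b\equiv 0$ and $a=-t\varphi$ with $\varphi\in C_0(X)$, $\varphi\ge 0$, $t>0$ (admissible since $(a,0)\in E$ iff $a\le 0$): then $B(\rho,m)\ge -t\int_X\varphi\,d\rho$ for every $t$, whence $\int_X\varphi\,d\rho\ge 0$ and so $\rho\ge 0$. To force $m\ll\rho$, suppose the singular part $m^s$ of $m$ relative to $\rho$ were nonzero; concentrating fields of the form $a=-\tfrac12 t^2$, $b=t\,w$ with $|w|\le 1$ aligned with $m^s$ near its ($\rho$-null) support, the $\rho$-integral stays small while the $m$-integral is bounded below by a positive multiple of $|m^s|$, so letting $t\to\infty$ gives $B=+\infty$, contradicting $B(\rho,m)<+\infty$. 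Finally (iv) is the special case $\lambda=\rho$ of (ii): since $\rho\ge 0$ and $m\ll\rho$ the Radon--Nikodym derivative $v=\tfrac{dm}{d\rho}$ exists and is measurable, and $\Psi(1,v)=\tfrac12|v|^2$ gives the stated formula.

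The main obstacle I anticipate is the reverse inequality in (ii): producing, from the measurable optimal fields $(a^\star,b^\star)$, a sequence in $C_0(X;E)$ that recovers the full integral $\int_X\Psi\,d\lambda$. The difficulty is twofold --- the optimizer $b^\star$ may be unbounded and must be truncated without losing too much mass, and the continuous approximants must be kept inside the closed convex set $E$ and given compact support, which requires combining Lusin's theorem with a retraction onto $E$ and a careful diagonal limit. All remaining steps are routine once this approximation is in place.
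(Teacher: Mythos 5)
The paper does not actually prove this proposition: it is stated as a summary of known facts and the proof is deferred to the cited references (Bredies et al.\ and Santambrogio, Lemma 5.17 and its sequel), so there is no in-paper argument to compare against. Your outline reconstructs exactly the standard proof from that literature: weak$^\ast$ lower semicontinuity and convexity as a supremum of weak$^\ast$-continuous affine maps, nonnegativity from the admissible pair $(0,0)\in E$, and the representation (ii) via the two-sided duality argument for the $1$-homogeneous convex integrand $\Psi=\delta_E^\ast$. The ``$\le$'' direction and items (i), (iv) are complete as written. Two remarks on the remaining parts. First, the reverse inequality in (ii) is, as you say, the delicate step; your plan (measurable selection of the pointwise maximizer $b^\star=\frac{dm/d\lambda}{d\rho/d\lambda}$, $a^\star=-\frac12|b^\star|^2$, truncation, Lusin approximation combined with a retraction onto the closed convex set $E$, then monotone convergence) is precisely how the cited references carry it out, so the sketch is sound, though it is a sketch rather than a proof; note also that the identity must be read with the convention that both sides are $+\infty$ when $\frac{d\rho}{d\lambda}<0$ or when $\frac{dm}{d\lambda}\neq0$ on $\{\frac{d\rho}{d\lambda}=0\}$. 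Second, in your direct argument for (iii) you must also control the contribution of the absolutely continuous part $m^a$ on the shrinking neighborhoods $U$ of the $\rho$-null set carrying $m^s$ (this works because $|m^a|(U)\to|m^a|(N)=0$ by outer regularity, but it should be said); a more economical route is to observe that (iii) is an immediate corollary of (ii) applied with $\lambda=|\rho|+|m|$, since $\Psi$ equals $+\infty$ wherever the sign or absolute-continuity condition fails. With these points filled in, the proposal is correct and follows the same route as the references the paper relies on.
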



The following lemma shows that the measure $m$ can be controlled by the Benamou-Brenier energy and $\rho$, which plays a key role in Lemma \ref{lemma_compact}.
\begin{lemma}[Integrability estimate \cite{maas2015generalized}]\label{lemma_integra}
	Let $\rho\in\mathcal{M}^+(X)$ and $m\in\mathcal{M}(X,\mathbb{R}^d)$. Then,
	\begin{equation*}
	|m|(X)\leq\Big(B(\rho,m)\Big)^\frac{1}{2}\Big(\rho(X)\Big)^\frac{1}{2}.
	\end{equation*}
\end{lemma}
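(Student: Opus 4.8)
The plan is to reduce the claim to a single application of the Cauchy--Schwarz inequality, once the structural description of finite-energy pairs $(\rho,m)$ provided by Proposition~\ref{pro:BB} is in hand. First I would dispose of the trivial case: if $B(\rho,m)=+\infty$, then the right-hand side is infinite whenever $\rho\neq 0$ and there is nothing to prove, so I may assume $B(\rho,m)<+\infty$. Under this assumption Proposition~\ref{pro:BB}(iii) guarantees that $m\ll\rho$ (recall that $\rho\geq 0$ is given by hypothesis), which is precisely the absolute-continuity condition needed to pass from the dual/variational definition \eqref{BBdual} to a pointwise representation of the energy.

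Next, by Proposition~\ref{pro:BB}(iv) there is a measurable map $v\colon X\to\RR^d$ with $m=\rho v$ and
\begin{equation*}
B(\rho,m)=\frac{1}{2}\int_X \abs{v}^2\dd{\rho}.
\end{equation*}
Since $\rho\geq 0$, the representation $m=\rho v$ identifies the total variation explicitly as $\abs{m}(X)=\int_X\abs{v}\dd{\rho}$. This is the key identity: it converts the statement into an integral inequality for the single scalar function $\abs{v}$ tested against the constant $1$, both integrated with respect to the finite nonnegative measure $\rho$. Finiteness of $B(\rho,m)$ is exactly what ensures $\abs{v}\in L^2(\rho)$, so that the Cauchy--Schwarz step below has a finite right-hand side.

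Finally I would apply Cauchy--Schwarz in $L^2(\rho)$:
\begin{equation*}
\abs{m}(X)=\int_X \abs{v}\cdot 1\dd{\rho}\leq\Big(\int_X\abs{v}^2\dd{\rho}\Big)^{\frac12}\Big(\int_X 1\dd{\rho}\Big)^{\frac12}=\big(2B(\rho,m)\big)^{\frac12}\big(\rho(X)\big)^{\frac12},
\end{equation*}
which is the asserted bound up to the normalization constant in the definition of $B$; the stated form (without the factor $\sqrt{2}$) corresponds to the convention $B(\rho,m)=\int_X\abs{v}^2\dd{\rho}$ used in the cited reference, whereas the present paper carries the $\tfrac12$. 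I do not expect a genuine obstacle: essentially all the work has been absorbed into Proposition~\ref{pro:BB}, whose parts (iii)--(iv) do the heavy lifting of turning the supremum defining $B$ into a density formula. The only points requiring a little care are the degenerate case $\rho=0$ (where, under $B(\rho,m)<+\infty$, the relation $m\ll\rho$ forces $m=0$ so that both sides vanish) and the measurability/integrability of $v$, which is guaranteed by Proposition~\ref{pro:BB}(iv) together with the finite-energy assumption.

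As an alternative that avoids invoking Proposition~\ref{pro:BB}(iv), one could argue directly from the dual definition \eqref{BBdual}: testing with fields $(a,b)=(-\tfrac12\abs{\phi}^2,\phi)\in C_0(X;E)$ yields $\int_X\phi\cdot dm\leq B(\rho,m)+\tfrac12\int_X\abs{\phi}^2\dd{\rho}$, and replacing $\phi$ by $t\phi$ and optimizing over $t>0$ gives $\big(\int_X\phi\cdot dm\big)^2\leq 2\,B(\rho,m)\int_X\abs{\phi}^2\dd{\rho}$; restricting to $\norm[\infty]{\phi}\leq 1$ and taking the supremum over such $\phi$ recovers the same estimate. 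I would, however, present the density-based argument above, since it is shorter and makes the role of each hypothesis transparent.
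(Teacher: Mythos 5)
The paper states this lemma without proof, deferring to the cited reference, so there is no in-paper argument to compare against; judged on its own, your proof is correct and is the standard one. The reduction via Proposition~\ref{pro:BB}(iii)--(iv) to $m=\rho v$, the identity $|m|(X)=\int_X|v|\,d\rho$ (valid since $\rho\geq 0$), and Cauchy--Schwarz in $L^2(\rho)$ is exactly the expected route, and your alternative argument from the dual definition \eqref{BBdual} (test with $(-\tfrac12|\phi|^2,\phi)$, rescale, optimize) is also sound and arguably more self-contained. The substantive point you flag is real: with the paper's normalization $B(\rho,m)=\tfrac12\int_X|v|^2\,d\rho$, both of your arguments yield
\begin{equation*}
|m|(X)\;\leq\;\Big(2B(\rho,m)\Big)^{\frac12}\Big(\rho(X)\Big)^{\frac12},
\end{equation*}
and the factor $\sqrt{2}$ cannot be removed (take $|v|$ constant $\rho$-a.e.\ to see the Cauchy--Schwarz step is sharp), so the inequality as printed in Lemma~\ref{lemma_integra} is off by $\sqrt{2}$. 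This is a defect of the statement rather than of your proof, and it is harmless downstream: in Lemma~\ref{lemma_compact} only a uniform bound on $|m^n|(X)$ in terms of $\sup_n B(\rho^n,m^n)$ and $\sup_n\rho^n(X)$ is used. The one remaining pedantic case is $\rho=0$ with $m\neq 0$, where $B(\rho,m)=+\infty$ and the right-hand side is the indeterminate product $\infty\cdot 0$; the statement should be read as vacuous there (or that configuration excluded), which does not affect any application in the paper.
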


In this paper, we focus on discussing the distributional sense solutions to the continuity equation, which is illustrated as follows.
\begin{definition}
	$(\rho,m)\in\mathcal{M}$ is the  measure solution to  $\partial_t\rho+\div m=0$ if
	\begin{equation}\label{cesolution}
	\int_X \partial_t\phi d\rho + \int_X \nabla\phi\cdot dm = 0, \quad \forall \phi \in C_c^1((0,1)\times\overline{\Omega}).
	\end{equation}
\end{definition}
Note that the momentum $m$ is equipped with the zero flux boundary condition on $\partial\Omega$. We remark that the above definition is inspired by \cite{Bredies2021,bredies2020optimal,ambrosio2005gradient}. The following proposition is a consequence of the disintegration theorem; see, for instance, \cite[Theorem 5.3.1]{ambrosio2005gradient}.
\begin{proposition}[\cite{bredies2020optimal,Bredies2021}]\label{pro:disin}
	Suppose that $(\rho,m)\in\mathcal{M}$ satisfies \eqref{cesolution}, with $\rho\in\mathcal{M}^+(X)$. Then $\rho$ disintegrates, with respect to $dt$, as $\rho=dt\otimes \rho_t$, with $\rho_t\in\mathcal{M}^+(\overline{\Omega})$, i.e., for every $\varphi(t,x)\in L^1(X,\rho)$
	\begin{equation*}
	\int_X \varphi(t,x)d\rho(t,x) = \int_{0}^1\int_{\overline{\Omega}}\varphi(t,x)d\rho_tdt.
	\end{equation*}
	Moreover, $t\mapsto\rho_t(\overline{\Omega})$ is a constant function with distributional derivative 0, which implies that the total mass  $\rho_t(\overline{\Omega})$ is constant in time.   
\end{proposition}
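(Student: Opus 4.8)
The plan is to obtain the fibred decomposition from the abstract disintegration theorem and then to identify the time marginal as a multiple of Lebesgue measure by probing the continuity equation with test functions that are constant in space.

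First I would invoke the disintegration theorem \cite[Theorem 5.3.1]{ambrosio2005gradient} for the nonnegative finite measure $\rho$ relative to the time projection $\pi:X\to T$, $\pi(t,x)=t$. Writing $\sigma:=\pi_{\#}\rho\in\mathcal{M}^+(T)$ for the time marginal, this produces a $\sigma$-a.e.\ uniquely determined Borel family $\{\nu_t\}_{t\in T}\subset\mathcal{M}^+(\overline{\Omega})$ of probability measures, each concentrated on the fibre $\{t\}\times\overline{\Omega}$, such that
\begin{equation*}
\int_X \varphi(t,x)\,d\rho(t,x)=\int_T\Big(\int_{\overline{\Omega}}\varphi(t,x)\,d\nu_t(x)\Big)d\sigma(t),\qquad \varphi\in L^1(X,\rho).
\end{equation*}
The only thing left is to show that $\sigma$ is a constant multiple of $dt$ and to absorb that constant into the fibres.

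To identify $\sigma$, I would test \eqref{cesolution} with separated functions $\phi(t,x)=\eta(t)$, $\eta\in C_c^1((0,1))$; these are admissible because $\overline{\Omega}$ is compact, and since $\phi$ does not depend on $x$ we have $\nabla\phi\equiv0$, so the momentum term drops out. What remains is $\int_X\eta'(t)\,d\rho=\int_0^1\eta'(t)\,d\sigma(t)=0$ for every such $\eta$, i.e.\ the distributional derivative of $\sigma$ vanishes on $(0,1)$. Since a finite measure on an interval with vanishing distributional derivative is a constant multiple of Lebesgue measure, we get $\sigma=c\,dt$ with $c=\rho(X)\ge0$. Setting $\rho_t:=c\,\nu_t\in\mathcal{M}^+(\overline{\Omega})$ then yields $\rho=dt\otimes\rho_t$, the nonnegativity of the fibres being inherited from $\rho\ge0$. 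For the mass conservation I would run the same test once more: using the disintegration just obtained, $\int_0^1\eta'(t)\,\rho_t(\overline{\Omega})\,dt=\int_X\eta'(t)\,d\rho=0$ for all $\eta\in C_c^1((0,1))$, so the scalar map $t\mapsto\rho_t(\overline{\Omega})$ has zero distributional derivative and is therefore a.e.\ equal to the constant $c=\rho(X)$, which is the asserted constancy of total mass.

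The step I expect to be delicate is the transition from ``zero distributional derivative on the open interval $(0,1)$'' to a representation valid on the closed cylinder $X=[0,1]\times\overline{\Omega}$. Because the test functions in \eqref{cesolution} are compactly supported in $(0,1)$, they are blind to any mass that $\rho$ might carry on the end slices $\{0\}\times\overline{\Omega}$ and $\{1\}\times\overline{\Omega}$; indeed $(\delta_0\otimes\mu,\,0)$ solves \eqref{cesolution} yet is not of the form $dt\otimes\rho_t$, so such endpoint atoms must be excluded for the statement to hold verbatim. I would handle this by appealing to the modelling setup, in which the endpoint marginals are prescribed separately (the constraints $\rho_0=\mu$ and $\rho_1=\nu$ entering \eqref{wasser2} and \eqref{proMode}), so that $\rho$ carries no spurious time-atoms and $\sigma=c\,dt$ holds on all of $[0,1]$; alternatively one simply restricts attention to the interior, where $\{0,1\}$ is $dt$-null. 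Once endpoint atoms are ruled out, the remainder is a routine pairing of the disintegration theorem with the fundamental lemma of the calculus of variations.
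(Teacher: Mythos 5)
The paper does not actually prove this proposition --- it quotes it from \cite{bredies2020optimal,Bredies2021} with the remark that it is ``a consequence of the disintegration theorem'' --- and your argument is precisely that intended proof: disintegrate $\rho$ over the time projection via \cite[Theorem 5.3.1]{ambrosio2005gradient}, then test \eqref{cesolution} with spatially constant functions $\eta(t)$ so that the momentum term drops and the time marginal $\sigma=\pi_\#\rho$ has vanishing distributional derivative on $(0,1)$, hence $\sigma=c\,dt$ there; absorbing $c$ into the normalized fibres gives $\rho=dt\otimes\rho_t$ and makes $t\mapsto\rho_t(\overline{\Omega})\equiv c$ immediate. The substance of your proof is correct and coincides with the cited source.

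The only point to tighten is your handling of the endpoints, which you rightly single out as the delicate step. Your first proposed fix is circular: the constraint $\rho_0=\mu$ is a condition \emph{on} the disintegration (or on its continuous representative), so it cannot be invoked to rule out time-atoms before the disintegration is established; and, as your own example $(\delta_0\otimes\mu,0)$ shows, solving \eqref{cesolution} does not by itself exclude mass on the slices $\{0\}\times\overline{\Omega}$ and $\{1\}\times\overline{\Omega}$, since the test functions vanish there. The honest reading --- and the one consistent with how the proposition is used in Lemma \ref{lemma_compact} --- is your second alternative: the identity $\sigma=c\,dt$, and hence $\rho=dt\otimes\rho_t$, holds on $(0,1)\times\overline{\Omega}$, i.e.\ up to the $dt$-negligible end slices, and the endpoint values $\rho_0$, $\rho_1$ are then assigned unambiguously only through the narrowly continuous representative of Proposition \ref{pro:conti_repre} (note also that then $c=\rho((0,1)\times\overline{\Omega})$ rather than $\rho(X)$ if endpoint atoms were present). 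With that reading your proof is complete; the second application of the test functions to reprove constancy of the mass is redundant, since $\rho_t(\overline{\Omega})=c\,\nu_t(\overline{\Omega})=c$ already follows from the fibres being probability measures, but it is not wrong.
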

Furthermore, a curve $t\in[0,1]\mapsto\rho_t\in\mathcal{M}(\overline{\Omega})$ is narrowly continuous if the map
	\begin{equation*}
	t \mapsto \int_{\overline{\Omega}}\varphi(x)d\rho_t(x)
	\end{equation*}
	is continuous for every fixed $\varphi(x)\in C(\overline{\Omega})$. We denote by $C_w([0,1];\mathcal{M}(\overline{\Omega}))$ the set of such curves, and by $C_w([0,1];\mathcal{M}^+(\overline{\Omega}))$ the set of narrowly continuous curves of nonnegative measures. The next proposition shows that the disintegration measures $\rho_t$, $t\in[0,1]$, are narrowly
continuous when $(\rho,m)$ solves the continuity equation. The proof can be referred to \cite[Proposition 2.4]{bredies2020optimal} and \cite[Lemma
8.1.2]{ambrosio2005gradient}.
\begin{proposition}[Continuous representative]\label{pro:conti_repre}
	Let  $(\rho,m)\in\mathcal{M}$ be the solution of \eqref{cesolution} with $\rho\in\mathcal{M}^+(X)$ and $\rho_t\in\mathcal{M}^+(\overline{\Omega})$ be the disintegrate of $\rho$ with respect to $dt$. Assume that $m=dt\otimes v_t\rho_t$ with $v_t:X\to\RR^d$ measurable functions such that
	\begin{equation*}
	\int_0^1\int_{\overline{\Omega}} |v_t(x)|d\rho_t(x)dt<\infty.
	\end{equation*}
	Then there exists a narrowly continuous curve $(t\mapsto\tilde{\rho}_t)\in C_w([0,1];\mathcal{M}^+(\overline{\Omega}))$ such that $\rho_t=\tilde{\rho}_t$ a.e. in $[0,1]$. 
\end{proposition}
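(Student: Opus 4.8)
The plan is to test the distributional continuity equation \eqref{cesolution} against separated test functions $\phi(t,x)=\eta(t)\varphi(x)$ in order to read off, one $\varphi$ at a time, the distributional time-derivative of the scalar curves $g_\varphi(t):=\int_{\overline{\Omega}}\varphi(x)\,d\rho_t(x)$. Fix $\varphi\in C^1(\overline{\Omega})$ and $\eta\in C_c^1((0,1))$; since $\overline{\Omega}$ is compact the product $\eta\varphi$ is an admissible test function in $C_c^1((0,1)\times\overline{\Omega})$, and substituting it into \eqref{cesolution} while using the disintegrations $\rho=dt\otimes\rho_t$ and $m=dt\otimes v_t\rho_t$ gives
$$\int_0^1\eta'(t)\,g_\varphi(t)\,dt=-\int_0^1\eta(t)\Big(\int_{\overline{\Omega}}\nabla\varphi\cdot v_t\,d\rho_t\Big)dt.$$
Thus $g_\varphi$ has distributional derivative $t\mapsto\int_{\overline{\Omega}}\nabla\varphi\cdot v_t\,d\rho_t$ on $(0,1)$, and the pointwise bound $|\nabla\varphi\cdot v_t|\le\|\nabla\varphi\|_\infty|v_t|$ combined with the hypothesis $\int_0^1\int_{\overline{\Omega}}|v_t|\,d\rho_t\,dt<\infty$ places this derivative in $L^1(0,1)$. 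Consequently $g_\varphi$ agrees almost everywhere with an absolutely continuous representative $\tilde g_\varphi$ on $[0,1]$.

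Next I would upgrade these scalar facts, each valid only up to a $\varphi$-dependent null set, into a single measure-valued curve. As $\overline{\Omega}$ is compact, $C(\overline{\Omega})$ is separable, so I fix a countable family $D\subset C^1(\overline{\Omega})$ that is dense in $C(\overline{\Omega})$ for the uniform norm, and set $N:=\bigcup_{\varphi\in D}\{t:g_\varphi(t)\ne\tilde g_\varphi(t)\}$, which is still Lebesgue-null. By Proposition \ref{pro:disin} the mass $t\mapsto\rho_t(\overline{\Omega})$ is constant, say equal to $M$, so $\{\rho_t\}_{t\notin N}$ is uniformly bounded in $\mathcal{M}^+(\overline{\Omega})$.

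To define $\tilde\rho_t$ at every $t\in[0,1]$, I would take any sequence $t_n\to t$ with $t_n\notin N$; weak$^\ast$ compactness of bounded sets in $\mathcal{M}(\overline{\Omega})$ extracts a subsequence with $\rho_{t_n}\wrightarrow\tilde\rho_t$, and $\tilde\rho_t\in\mathcal{M}^+(\overline{\Omega})$ with mass $M$ because each $\rho_{t_n}$ is. For $\varphi\in D$ one has $\int_{\overline{\Omega}}\varphi\,d\rho_{t_n}=\tilde g_\varphi(t_n)\to\tilde g_\varphi(t)$, so necessarily $\int_{\overline{\Omega}}\varphi\,d\tilde\rho_t=\tilde g_\varphi(t)$ for all $\varphi\in D$; density of $D$ together with the uniform mass bound then forces $\tilde\rho_t$ to be independent of the chosen sequence. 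The same two ingredients give narrow continuity: for a general $\varphi\in C(\overline{\Omega})$ one approximates $\varphi$ uniformly by some $\varphi_k\in D$ and estimates $|\int\varphi\,d\tilde\rho_s-\int\varphi\,d\tilde\rho_t|$ by $2M\|\varphi-\varphi_k\|_\infty+|\tilde g_{\varphi_k}(s)-\tilde g_{\varphi_k}(t)|$, which is small for $s$ near $t$. Hence $\tilde\rho\in C_w([0,1];\mathcal{M}^+(\overline{\Omega}))$, and for $t\notin N$ the equalities $\int\varphi\,d\rho_t=\tilde g_\varphi(t)=\int\varphi\,d\tilde\rho_t$ over the dense family $D$ yield $\rho_t=\tilde\rho_t$, the desired a.e.\ identification.

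I expect the main obstacle to be precisely this second-to-third step: the null set on which $g_\varphi$ fails to be continuous depends on $\varphi$, so one cannot merely assert that $t\mapsto\rho_t$ is continuous. The resolution hinges on the constant-mass conclusion of Proposition \ref{pro:disin}, which supplies both the weak$^\ast$ compactness needed to define $\tilde\rho_t$ off the countable dense family and the uniform bound needed to pass from that family to all of $C(\overline{\Omega})$ and to all times. By contrast, the first step---identifying the distributional derivative and bounding it in $L^1$---is a direct consequence of \eqref{cesolution} and the integrability hypothesis, and should require no delicate argument.
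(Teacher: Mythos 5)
Your argument is correct and is essentially the standard proof of this statement: the paper itself omits the proof, deferring to \cite[Proposition 2.4]{bredies2020optimal} and \cite[Lemma 8.1.2]{ambrosio2005gradient}, and the argument given there is exactly your scheme (separated test functions to get an absolutely continuous representative of $t\mapsto\int\varphi\,d\rho_t$ with $L^1$ derivative, then a countable dense family in $C(\overline{\Omega})$ plus the constant-mass bound and weak$^\ast$ compactness to build the narrowly continuous curve and identify it a.e.). You also correctly flag the only delicate point, the $\varphi$-dependence of the exceptional null set, and resolve it as in the cited references.
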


\section{The mathematical model}
In this section, we will give our proposed model and the existence of solutions to it. Also, an application tp MRI reconstruction is discussed. Before giving the model, we exhibit some definitions. Let $\mu\in\mathcal{M}^+(\overline{\Omega})$ and define 
\begin{equation}
\mathcal{D}=\{(\rho,m)\in\radon: \partial_t\rho+\div m=0 \quad\text{in the sense of \eqref{cesolution} and $\rho_0=\mu$}\}.
\end{equation}
It is easy to check that the set $\mathcal{D}$ is weak$^\ast$ closed. For $\zeta\in\radon(\overline{\Omega})$, we further define its TV norm associated with the weak derivative as
\begin{equation}
\|\nabla\zeta\|_\radon=\sup\left\{\langle\zeta,\div\phi\rangle:\phi\in C_c^1(\Omega,\RR^d), \|\phi\|_\infty\leq 1\right\}.
\end{equation}
Note that $\zeta\in L^1(\overline{\Omega})$ since $\nabla \zeta$ is a measure.
\begin{lemma}\label{lemma:weakstar}
	$\|\nabla\zeta\|_\radon$ is weak$^\ast$ lower semicontinuous on $\mathcal{M}(\overline{\Omega})$.
\end{lemma}
\begin{proof}
	Let $\zeta^n\overset{\ast}{\wrightarrow} \zeta$ in $\mathcal{M}(\overline{\Omega})$. We can choose a sequence $\phi^k\in C_c^1(\Omega,\RR^d) $  with  $\|\phi^k\|_\infty\leq 1$ such that
	\begin{equation*}
	\|\nabla\zeta\|_\radon = \lim_k \langle\zeta,\div\phi^k\rangle.
	\end{equation*}
	Then by the weak$^\ast$ convergence of $\zeta^n$, we have
	\begin{equation*}
	\begin{aligned}
	\langle\zeta,\div\phi^k\rangle&=\lim_n \langle\zeta^n,\div\phi^k\rangle\\
	&=\liminf_n \langle\zeta^n,\div\phi^k\rangle\\
	&\leq \liminf_n \sup_{\phi\in C_c^1(\Omega,\RR^d),\|\phi\|_\infty\leq 1}\langle\zeta^n,\div\phi\rangle\\
	&= \liminf_n \|\nabla\zeta^n\|_\radon.
	\end{aligned}
	\end{equation*}
	Taking the limit over $k$ yields
	\begin{equation*}
	\|\nabla\zeta\|_\radon\leq \liminf_n \|\nabla\zeta^n\|_\radon.
	\end{equation*}
\end{proof}

Assume that $H(\overline{\Omega})$ is some Hilbert space. Let $\alpha,\beta>0$, $f\in H(\overline{\Omega})$ and $K:\mathcal{M}(\overline{\Omega})\to H(\overline{\Omega})$ is linear and weak$^\ast$-to-weak continuous. We define the functional $J:\mathcal{M}\to [0,+\infty]$:
\begin{equation}\label{J}
J(\rho,m):=B(\rho,m) + \frac{\alpha}{2}\|K\rho_1-f\|_H^2+\beta\|\nabla\rho_1\|_\mathcal{M},
\end{equation}
if $(\rho,m)\in\mathcal{D}$ and $J=+\infty$ otherwise. Then, the proposed variational minimization problem with Wasserstein prior and total variation in this paper reads as 
\begin{equation}\label{inOT}
\min_{(\rho,m)\in\mathcal{D}} J(\rho,m).
\end{equation}
The last two terms in $J$ are common in regularized inverse problems that are called the fidelity term and regularizer. The motivation of adding Wasserstein distance $B(\rho,m)$ is to provide the prior information of the template $\mu$ for the reconstructed $\rho_1$. Indeed,  the Wasserstein prior is a distance functional to characterize the discrepancy between $\mu$ and $\rho_1$. Noting that the variable $(\rho,m)$ in \eqref{inOT} satisfies the continuity equation, which implies that the total mass of the reconstructed image should be equal to the template. Moreover, the topology-preserving property needs not to be satisfied for our framework. The choice of $\mu$ depends on specific applications. In medical imaging, ones can generate an atlas from a specific organ dataset, e.g., brain images, then this atlas can be regarded as a template. Moreover, for dynamic cardiac MRI reconstruction, the structures of adjacent frame images occur deformations due to the heart beating, therefore, the former frame image can be recognized as a template for the latter one.

The following lemma shows the compactness property of the functional $J$ in our proposed model \eqref{inOT} which is important to the existence of solutions. This is obtained referred to the work \cite{bredies2020optimal} with slight changes.

\begin{lemma}\label{lemma_compact}
	Let $f\in H(\overline{\Omega})$, $\mu\in\mathcal{M}^+(\overline{\Omega})$ and $\alpha,\beta>0$. Assume that there exists a constant $C_1>0$ such that the sequence $\{(\rho^n,m^n)\}\in \mathcal{M}$ satisfies 
	\begin{equation}\label{Jbdd}
	\sup_n J(\rho^n,m^n)< C_1.
	\end{equation}
	Then $\rho^n=dt\otimes\rho_t^n$ for some $(t\mapsto\rho_t^n)\in C_w([0,1];\mathcal{M}^+(\overline{\Omega}))$. Furthermore, there exists $(\rho,m)\in\mathcal{D}$ with $\rho=dt\otimes\rho_t$, $(t\mapsto\rho_t)\in C_w([0,1];\mathcal{M}^+(\overline{\Omega}))$ such that, up to subsequence,
	\begin{equation}\label{rho_conver}
	\begin{cases}
	\text{$(\rho^n,m^n)\overset{\ast}{\wrightarrow}(\rho,m)$ weakly$^\ast$ in $\mathcal{M}$,}\\
	\text{$\rho^n_1\overset{\ast}{\wrightarrow} \rho_1$ weakly$^\ast$ in $\mathcal{M}(\overline{\Omega})$}. 
	\end{cases}
	\end{equation}
\end{lemma}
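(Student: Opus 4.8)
The plan is to run the direct method on \eqref{inOT}: extract uniform bounds from the energy bound \eqref{Jbdd}, invoke weak$^\ast$ compactness to produce a candidate limit $(\rho,m)$, use the already-stated weak$^\ast$ closedness of $\mathcal{D}$ together with the lower semicontinuity of $B$ to place the limit inside $\mathcal{D}$, and then upgrade bulk weak$^\ast$ convergence on the cylinder to convergence of the single time-$1$ slice.

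First I would collect the bounds and the structural conclusions for the sequence. Since $J=+\infty$ off $\mathcal{D}$, \eqref{Jbdd} forces $(\rho^n,m^n)\in\mathcal{D}$ and $B(\rho^n,m^n)<C_1$. By Proposition~\ref{pro:BB}(iii) each $\rho^n\geq0$, so Proposition~\ref{pro:disin} applies and gives $\rho^n=dt\otimes\rho_t^n$ with $t\mapsto\rho_t^n(\overline{\Omega})$ constant; hence $\rho^n(X)=\mu(\overline{\Omega})$ and $\|\rho^n\|_\mathcal{M}$ is uniformly (in fact constantly) bounded. Writing $m^n=dt\otimes v_t^n\rho_t^n$ via Proposition~\ref{pro:BB}(iv) and using Cauchy--Schwarz yields $\int_0^1\int_{\overline{\Omega}}|v_t^n|\,d\rho_t^n\,dt<\infty$, so Proposition~\ref{pro:conti_repre} supplies the narrowly continuous representatives $(t\mapsto\rho_t^n)\in C_w([0,1];\mathcal{M}^+(\overline{\Omega}))$ of the first assertion, while Lemma~\ref{lemma_integra} bounds the momentum, $|m^n|(X)\leq (B(\rho^n,m^n))^{1/2}(\rho^n(X))^{1/2}\leq (C_1\mu(\overline{\Omega}))^{1/2}$, uniformly in $n$. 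With $\{\rho^n\}$ and $\{m^n\}$ bounded in Radon norm, the weak$^\ast$ sequential compactness recalled in Section~2 gives a subsequence (not relabelled) with $(\rho^n,m^n)\overset{\ast}{\wrightarrow}(\rho,m)$ in $\mathcal{M}$. Since $\mathcal{D}$ is weak$^\ast$ closed, $(\rho,m)\in\mathcal{D}$, so $\rho\geq0$ and $\rho=dt\otimes\rho_t$ disintegrates; lower semicontinuity of $B$ gives $B(\rho,m)\leq\liminf_n B(\rho^n,m^n)<\infty$, and the same Cauchy--Schwarz argument with Proposition~\ref{pro:conti_repre} produces the narrowly continuous representative $(t\mapsto\rho_t)\in C_w([0,1];\mathcal{M}^+(\overline{\Omega}))$.

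The hard part will be the second line of \eqref{rho_conver}: bulk weak$^\ast$ convergence of $\rho^n$ on $X$ does not by itself control the single slice $\rho_1^n$. To handle this I would first establish an equicontinuity estimate. Testing the continuity equation against a time-independent $\varphi\in C^1(\overline{\Omega})$ shows that $t\mapsto\langle\rho_t^n,\varphi\rangle$ is absolutely continuous with $\frac{d}{dt}\langle\rho_t^n,\varphi\rangle=\int_{\overline{\Omega}}\grad\varphi\cdot v_t^n\,d\rho_t^n$, whence by Cauchy--Schwarz and $\int_0^1\int_{\overline{\Omega}}|v_t^n|^2\,d\rho_t^n\,dt=2B(\rho^n,m^n)$,
\begin{equation*}
|\langle\rho_t^n-\rho_s^n,\varphi\rangle|\leq\|\grad\varphi\|_\infty\,(2C_1)^{1/2}\,(\mu(\overline{\Omega}))^{1/2}\,|t-s|^{1/2}.
\end{equation*}
Because every $\rho_t^n$ lies in the fixed mass level set $\{\sigma\in\mathcal{M}^+(\overline{\Omega}):\sigma(\overline{\Omega})=\mu(\overline{\Omega})\}$, which is weak$^\ast$ compact and metrizable, this is precisely uniform $1/2$-Hölder equicontinuity of the curves $\{t\mapsto\rho_t^n\}$ in a metric inducing narrow convergence. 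An Arzel\`a--Ascoli argument for curves then extracts a further subsequence converging uniformly in $t$ to a narrowly continuous limit curve, and evaluating at $t=1$ gives $\rho_1^n\overset{\ast}{\wrightarrow}\rho_1$ in $\mathcal{M}(\overline{\Omega})$.

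The remaining point is to identify this uniform limit with the disintegration $\rho_t$ of the bulk limit $\rho$, so that the slice reached at $t=1$ is genuinely $\rho_1$. I would do this by testing against separated functions $\psi(t)\varphi(x)$: passing to the limit in $\int_0^1\psi(t)\langle\rho_t^n,\varphi\rangle\,dt$ using the uniform convergence of the curves on one side and the bulk weak$^\ast$ convergence of $\rho^n$ on the other forces the two limit curves to agree for a.e.\ $t$, and narrow continuity of both promotes the identity to every $t$, in particular $t=1$. Apart from the slice-convergence step, the only technical care needed is the justification of the absolute continuity of $t\mapsto\langle\rho_t^n,\varphi\rangle$ and the choice of a concrete metric (a bounded-Lipschitz norm on $(C^1)^\ast$) that metrizes narrow convergence on the mass level set; both are routine and follow the scheme of \cite{bredies2020optimal}.
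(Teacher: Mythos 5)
Your proposal is correct, and its first half coincides with the paper's own argument: both derive $\sup_n B(\rho^n,m^n)<C_1$ from \eqref{Jbdd}, use Proposition~\ref{pro:BB} and Proposition~\ref{pro:disin} to get nonnegativity, the disintegration, and the constant mass $\|\rho^n\|_\mathcal{M}=\mu(\overline{\Omega})$, bound $|m^n|(X)$ via Lemma~\ref{lemma_integra}, extract weak$^\ast$ limits, and invoke the weak$^\ast$ closedness of $\mathcal{D}$ and the lower semicontinuity of $B$ to structure the limit. Where you genuinely diverge is the second line of \eqref{rho_conver}. The paper dispatches it by noting $\rho_1^n(\overline{\Omega})=\mu(\overline{\Omega})$ and extracting a weak$^\ast$ convergent subsequence of the slices to ``some $\rho_1$''; this produces a cluster point but never identifies it with the time-$1$ slice of the disintegration of the bulk limit $\rho$, which is what the lemma actually asserts and what Theorem~\ref{theo:exis} needs (the lower semicontinuity of $\|K\cdot-f\|_H^2$ and $\|\nabla\cdot\|_\mathcal{M}$ must be applied at the slice of the candidate minimizer, not at an unrelated limit). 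Your route --- the $|t-s|^{1/2}$ equicontinuity estimate obtained from the continuity equation and the Benamou--Brenier bound, Arzel\`a--Ascoli on the weak$^\ast$-compact, metrizable mass level set, and the identification of the uniform limit curve with the disintegration of $\rho$ by testing against products $\psi(t)\varphi(x)$ --- supplies exactly this missing identification; it is essentially the argument of \cite{bredies2020optimal}, which the paper cites as its source but does not reproduce. The price is the extra technical work you already flag (upgrading test functions supported in $(0,1)\times\overline{\Omega}$ to time-independent $\varphi\in C^1(\overline{\Omega})$, and fixing a metric for narrow convergence), all of which is standard. In short, your proof is the more complete one: it closes a step the paper's own proof leaves open.
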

\begin{proof}
	By the boundedness of $J(\rho^n,m^n)$ in \eqref{Jbdd}, we have
	\begin{equation}
	\sup_n B(\rho^n,m^n)< C_1.
	\end{equation}
	Therefore, by Proposition \ref{pro:BB}, it follows that $\rho^n\geq 0$, $m^n=v_t^n\rho^n$ for a measurable map $v_t^n:X\to\RR^d$ such that
	\begin{equation}\label{BBbound}
	\sup_n\int_X |v_t^n(x)|^2d\rho^n(t,x)<+\infty.
	\end{equation}
	By \eqref{Jbdd}, we also have $(\rho^n,m^n)\in\mathcal{D}$.
	 Consequently, it follows from Proposition \ref{pro:disin} that $\rho^n=dt\otimes\rho_t^n$ for some $\rho_t^n\in\mathcal{M}^+(\overline{\Omega}),t\in[0,1]$, and $\rho^n_0=\mu$. In particular, $m^n=dt\otimes(v_t^n\rho_t^n)$. Then by \eqref{BBbound} and Proposition \ref{pro:conti_repre}, we obtain $(t\mapsto\rho_t^n)\in C_w([0,1];\mathcal{M}^+(\overline{\Omega}))$. Moreover, by Proposition \ref{pro:disin}, we have that $\rho_t^n(\overline{\Omega})$ is constant with respect to time $t$, i.e., $\rho^n_t(\overline{\Omega})=\rho^n_0(\overline{\Omega})$, for $t\in(0,1]$. Since $\mu\in\radon^+(\overline{\Omega})$, there exists a constant $C_2>0$ such that
	 \begin{equation}\label{rho_bdd}
	 \|\rho^n\|_\radon=\int_{0}^1\rho_t^n(\overline{\Omega})dt= \rho_0^n(\overline{\Omega})=\mu(\overline{\Omega})<C_2.
	 \end{equation}
	 It is then easy to see that there exists a subsequence (not relabeled) $\rho^n\overset{\ast}{\wrightarrow} \rho$ for some $\rho\in\mathcal{M}(X)$. 
	 
    Also, by Lemma \ref{lemma_integra}, we have 
    \begin{equation*}
    |m^n|(X)\leq \sqrt{B(\rho^n,m^n)\rho^n(X)},
    \end{equation*}
	It follows that
	\begin{equation*}
	\sup_n |m^n|(X)\leq \sup_n\sqrt{B(\rho^n,m^n)\rho^n(X)} <\sqrt{C_1C_2},
	\end{equation*}
	which implies that the sequence measures $\{m^n\}$ has uniformly bounded total variation. Therefore, we can exact a subsequence (not relabeled) $m^n\overset{\ast}{\wrightarrow} m$ for some $m\in\radon(X;\RR^d)$. Due to the $weak^\ast$ closedness of $\mathcal{D}$, we have $(\rho,m)\in \mathcal{D}$. 
	Note that the functional $B$ is $weak^\ast$ lower semicontinuous from Proposition \ref{pro:BB}. Therefore $B(\rho,m)<+\infty$, then we have $\rho=dt\otimes\rho_t$ and $m=dt\otimes(v_t\rho_t)$ with $\rho_t\in C_w([0,1];\mathcal{M}^+(\overline{\Omega}))$ by Proposition \ref{pro:disin}.
	
     Next, we aim to show the second convergence of \eqref{rho_conver}. By $(\rho^n,m^n)$ solving the continuity equation, it follows from Proposition \ref{pro:disin} that the map $t\mapsto\rho_t^n(\overline{\Omega})$ is constant function. Then it is obvious to obtain
    \begin{equation*}
    \|\rho_1^n\|_\radon=\rho_1^n(\overline{\Omega})=\rho_0^n(\overline{\Omega})=\mu(\overline{\Omega})<C_2.
    \end{equation*}
	Consequently, there exists a subsequence (not relabeled) $\rho_1^n\overset{\ast}{\wrightarrow} \rho_1$ for some $\rho_1\in\mathcal{M}(\overline{\Omega})$, which concludes the proof.
	
\end{proof}
 

\begin{theorem}\label{theo:exis}
	Let $f\in H(\overline{\Omega})$ , $\mu\in \mathcal{M}^+(\overline{\Omega})$, $K: \mathcal{M}(\overline{\Omega})\to H(\overline{\Omega})$ be linear and weak$^\ast$-to-weak continuous, and $\alpha,\beta>0$. Then the minimization problem \eqref{inOT} admits a solution.	 
\end{theorem}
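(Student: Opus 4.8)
The plan is to run the direct method of the calculus of variations, drawing the compactness from Lemma \ref{lemma_compact} and then assembling the lower semicontinuity of $J$ term by term. Since $B\geq 0$ by Proposition \ref{pro:BB} and the fidelity and total variation terms are manifestly nonnegative, the value $j:=\inf_{(\rho,m)\in\mathcal{D}}J(\rho,m)$ is well defined in $[0,+\infty]$. If $j=+\infty$ the assertion is vacuously true, so I may assume $j<+\infty$ (the set $\mathcal{D}$ is nonempty and contains, e.g., the stationary pair $\rho=dt\otimes\mu$, $m=0$, for which $B=0$). First I would fix a minimizing sequence $(\rho^n,m^n)\in\mathcal{D}$ with $J(\rho^n,m^n)\to j$; discarding finitely many terms gives $\sup_n J(\rho^n,m^n)<C_1$ for some $C_1>0$, so the hypothesis \eqref{Jbdd} of Lemma \ref{lemma_compact} is satisfied.

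Applying Lemma \ref{lemma_compact} then yields a limit $(\rho,m)\in\mathcal{D}$ with $\rho=dt\otimes\rho_t$ and $(t\mapsto\rho_t)\in C_w([0,1];\mathcal{M}^+(\overline{\Omega}))$, together with a (non-relabelled) subsequence along which $(\rho^n,m^n)\overset{\ast}{\wrightarrow}(\rho,m)$ in $\mathcal{M}$ and $\rho^n_1\overset{\ast}{\wrightarrow}\rho_1$ in $\mathcal{M}(\overline{\Omega})$, where $\rho_1$ is the terminal slice of the continuous representative of $\rho$. The remaining task is to prove $J(\rho,m)\leq\liminf_n J(\rho^n,m^n)$, which I would establish for each of the three summands separately. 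The Benamou-Brenier term is weak$^\ast$ lower semicontinuous by Proposition \ref{pro:BB}, so the first convergence gives $B(\rho,m)\leq\liminf_n B(\rho^n,m^n)$. The total variation term is controlled by the second convergence together with Lemma \ref{lemma:weakstar}, yielding $\|\nabla\rho_1\|_\mathcal{M}\leq\liminf_n\|\nabla\rho^n_1\|_\mathcal{M}$.

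For the fidelity term I would exploit the weak$^\ast$-to-weak continuity of $K$: from $\rho^n_1\overset{\ast}{\wrightarrow}\rho_1$ we obtain $K\rho^n_1\wrightarrow K\rho_1$ weakly in $H$, hence $K\rho^n_1-f\wrightarrow K\rho_1-f$, and since the norm of a Hilbert space is weakly lower semicontinuous, $\|K\rho_1-f\|_H^2\leq\liminf_n\|K\rho^n_1-f\|_H^2$. Adding the three inequalities and using that the sum of the $\liminf$'s is bounded above by the $\liminf$ of the sum gives $J(\rho,m)\leq\liminf_n J(\rho^n,m^n)=j$. Since $(\rho,m)\in\mathcal{D}$ forces $J(\rho,m)\geq j$, equality holds and $(\rho,m)$ solves \eqref{inOT}.

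The main point to get right is the bookkeeping of which convergence feeds which term: the Benamou-Brenier energy uses the full weak$^\ast$ convergence on the cylinder $X$, whereas both the fidelity and the total variation terms depend only on the endpoint, so the argument genuinely relies on the endpoint convergence $\rho^n_1\overset{\ast}{\wrightarrow}\rho_1$ and on its identification with the terminal slice of the limit — precisely what Lemma \ref{lemma_compact} supplies. The only ingredient not quoted verbatim from the preceding results is the lower semicontinuity of the quadratic fidelity, which is the combination of the weak$^\ast$-to-weak continuity of $K$ with the weak lower semicontinuity of the Hilbert-space norm; I expect this coupling to be the one step requiring care, while everything else is a direct appeal to the cited lemmas and propositions.
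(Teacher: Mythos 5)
Your proof is correct and follows essentially the same route as the paper: take a minimizing sequence with uniformly bounded energy, extract the limit via Lemma \ref{lemma_compact}, and pass to the limit term by term using the weak$^\ast$ lower semicontinuity of $B$, Lemma \ref{lemma:weakstar} for the total variation, and the weak$^\ast$-to-weak continuity of $K$ combined with the weak lower semicontinuity of the Hilbert norm for the fidelity. The only extra (and harmless) remark is your discussion of the case $j=+\infty$, which the paper passes over silently; note only that the stationary pair you cite need not have finite $J$ since $\|\nabla\mu\|_{\mathcal{M}}$ may be infinite, but you use it solely to show $\mathcal{D}\neq\emptyset$, so nothing breaks.
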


\begin{proof}
	Since the energy $J$ is bounded from below, we can seek a minimizing sequence $\{(\rho^n,m^n)\}$ and a constant $C>0$, such that
	\begin{equation}
	\sup_n J(\rho^n,m^n)< C.
	\end{equation}
	Form Lemma \ref{lemma_compact}, we have $\rho^n=dt\otimes\rho_t^n$ and $(t\mapsto\rho_t^n)\in C_w([0,1];\mathcal{M}^+(\overline{\Omega}))$. Also, there exists $(\rho^\ast,m^\ast)\in\mathcal{D}$ with $\rho^\ast=dt\otimes\rho_t^\ast$, $(t\mapsto\rho_t^\ast)\in C_w([0,1];\mathcal{M}^+(\overline{\Omega}))$ such that, up to subsequence, $(\rho^n,m^n)\overset{\ast}{\wrightarrow}(\rho^\ast,m^\ast)$ weakly$^\ast$ in $\radon$ and $\rho_1^n\overset{\ast}{\wrightarrow}\rho_1^\ast$ weakly$^\ast$ in $\radon(\overline{\Omega})$.
    In particular, by the definition of $K$, $K\rho_1^n\wrightarrow K\rho_1^\ast$ weakly in $H(\overline{\Omega})$. Therefore, by the lower semi-continuity of the norm
	with respect to the weak convergence, we have
	\begin{equation*}
	\|K\rho_1^\ast-f\|_H^2\leq  \liminf_{n} \|K\rho_1^n-f\|_H^2.
	\end{equation*}
	Furthermore, by the weak$^\ast$ lower semi-continuity of $B$ and $\|\nabla \rho_1\|_\radon$ in Proposition \ref{pro:BB} and Lemma \ref{lemma:weakstar}, we have
	\begin{equation*}
	J(\rho^\ast,m^\ast)\leq \liminf_n J(\rho^n,m^n),
	\end{equation*}
	which implies that $(\rho^\ast,m^\ast)$ is a minimizer.
	
\end{proof}

\textbf{Application to MRI reconstruction:} We now apply our proposed model to realize the undersampled MRI reconstruction, in which the settings can be referred to \cite{bredies2020optimal}. Let $\Omega\subset\RR^2$ be an open bounded domain representing the image domain and  let $\sigma_1\in\mathcal{M}^+(\RR^2)$ be a measure such that
\begin{itemize}
	\item [(M1)] $\|\sigma_1\|_\mathcal{M}\leq C$, where $C>0$;
	\item [(M2)] the map $t\mapsto \int_{\RR^2}\varphi(x)d\sigma_1(x)$ is measurable for each $\varphi(x)\in C_0(\RR^2;\mathbb{C})$.  
\end{itemize}
Let $L^2_{\sigma_1}(\RR^2;\mathbb{C})$ be a Hilbert space normed by $\|f\|^2_{L^2_{\sigma_1}(\RR^2;\mathbb{C})}:=\int_{\RR^2}|f(x)|^2d\sigma_1(x)$. For a measure $\rho\in\mathcal{M}(\overline{\Omega};\mathbb{C})$, we denote its Fourier transform as
\begin{equation}\label{FourierOpe}
\mathscr{F}\rho(x):=\frac{1}{2\pi}\int_{\RR^2}e^{-i\omega\cdot x}d\rho(\omega),
\end{equation}
 where we extend $\rho$ to be zero outside of $\overline{\Omega}$. Accordingly, we define the linear operator $K:\mathcal{M}(\overline{\Omega};\mathbb{C})\to L^2_{\sigma_1}(\RR^2;\mathbb{C})$ as
 \begin{equation*}
 K \rho:=\mathscr{F}\rho.
 \end{equation*}
Then,  given some $ f\in L^2_{\sigma_1}(\RR^2)$, the corresponding minimization problem with respect to MRI reconstruction is 
\begin{equation}\label{MRImodel}
\min_{(\rho,m)\in\mathcal{D}} B(\rho,m) + \frac{\alpha}{2}\|\mathscr{F}\rho_1-f\|_{L^2_{\sigma_1}(\RR^2;\mathbb{C})}^2+\beta\|\nabla\rho_1\|_\mathcal{M}.
\end{equation}
Note that the sampling pattern for such application is given by $\sigma_1$, and some examples, e.g., continuous sampling and compressed-sensing sampling, can be found in \cite{bredies2020optimal}. Moreover, from \cite[Lemma 5.4]{bredies2020optimal}, the Fourier transform operator in \eqref{FourierOpe} is weak$^\ast$-to-weak continuous under (M1) and (M2), and the minimization problem \eqref{MRImodel} admits a solution according to Theorem \ref{theo:exis}.

\section{Algorithm and numerical grids}
In this section, we firstly employ the primal-dual method \cite{CP} to solve our new variational model based on Wasserstein distance and total variation regularization. Note that the algorithm part is illustrated in some abstract finite-dimensional spaces, and then the numerical grids part follows with a specific discretization. We propose to utilize the centered and staggered grids \cite{anderson1995computational,harlow1965numerical} for $\rho$ and $m$, respectively, in which the later is well-performed in fluid mechanics. 

\subsection{Algorithm}


We emphasize that the algorithm designed in this subsection is devoted to the discrete form of our minimization problem.
Denote that $U$, $V$, $Y$ are some finite-dimensional Hilbert spaces, such as Euclidean space. In particular, $U$ can be disintegrated as $\underbrace{U_x\times,...,\times U_x}_{\text{$N_t$ times}}$ with $U_x$ and $N_t$ representing the space discretization and the number of time discretization, respectively. We further assume that $K:U_x\to Y$ is a linear and continuous operator, and $\rho_1\in U_x$, $f\in Y$. Denote $W=U_x\times U_x$, and the corresponding differential operators are defined as $\nabla: U_x\to W$, $\div:V\to U$, $\partial_t:U\to U$.
We now begin to delineate the algorithm to solve our proposed model based on Wasserstein prior and TV regularization. In order to make the algorithm simple, we do not consider the constraint $\rho_0=\mu$ in the algorithm formulation, but we will force $\rho_0^{l+1}=\mu$ in each iteration, which, in practice, is a common trick. Consequently, the discrete form of our minimization problem \eqref{inOT} without the initial state condition can be written as
\begin{equation}\label{disgenMod}
\begin{aligned}
\min_{(\rho,m)\in U\times V} &\int_{\overline{\Omega}} \int_{0}^{1}\frac{|m|^2}{2\rho}dtdx+\frac{\alpha}{2}\|K\rho_1-f\|^2+\beta \|\nabla \rho_1\|_1 + \delta_{\{0\}}(\partial_t\rho+\div m).\\
\end{aligned}
\end{equation}
For \eqref{disgenMod}, one
can see that Fenchel-Rockafellar duality is applicable, such that primal-dual
solutions are equivalent to solutions of the saddle-point problem
\begin{equation}\label{saddp}
\begin{aligned}
\min_{(\rho,m)\in U\times V} \max_{(\lambda,\eta,\zeta)\in U\times Y\times W}\quad &\mathcal{F}(\rho,m;\lambda,\eta,\zeta):=\int_{\overline{\Omega}}\int_{0}^{1}\frac{|m|^2}{2\rho}dtdx+\langle \lambda, \partial_t\rho+\div m\rangle\\
&+\langle\eta, K\rho_1-f\rangle-\frac{1}{2\alpha}\|\eta\|^2+\langle \zeta, \nabla\rho_1\rangle-\delta_{\{\|\cdot\|_\infty\leq\beta\}}(\zeta),
\end{aligned}
\end{equation}
where $\lambda,\eta,\zeta$ are the dual variables. 

The classical primal-dual algorithm  solves the convex-concave saddle-point problem of the form
\begin{equation}\label{eq:pdorig}
\min_{x\in\mathcal{X}} \ \max_{y\in\mathcal{Y}} \ \langle \mathcal{K}x,y\rangle +G(x)-F^\ast(y),
\end{equation}
where $\mathcal{X},\mathcal{Y}$ are Hilbert spaces,
$\mathcal{K}:\mathcal{X}\rightarrow\mathcal{Y}$ is a continuous linear
mapping, and the functionals
$G:\mathcal{X}\rightarrow {({-\infty,\infty}]}$ and
$F^\ast:\mathcal{Y}\rightarrow {({-\infty,\infty}]}$ are proper, convex and
lower semi-continuous. The problem~\eqref{eq:pdorig} is associated to the
Fenchel--Rockafellar primal-dual problems
\begin{equation}
\min_{x\in\mathcal{X}} \ F(\mathcal{K}x)+G(x), \qquad 
\max_{y \in \mathcal{Y}} \ -F^*(y) -G^*(-\mathcal{K}^*y).
\end{equation}
In order to state the algorithm clearly, we have to give the notion of
resolvent operators $(I+\tau\partial G)^{-1}$ and
$(I+\sigma\partial F^\ast)^{-1}$, respectively, which correspond to
the solution operators of certain minimization problems, the so-called
proximal operators:
\begin{equation*}
\begin{split}
& x^\ast=(I+\tau\partial G)^{-1}(\bar{x})=\argmin_{x\in\mathcal{X}} \ \frac{\|x-\bar{x}\|^2}{2}+\tau G(x),\\
&y^\ast=(I+\sigma\partial F^\ast)^{-1}(\bar{y})=\argmin_{y\in\mathcal{Y}} \ \frac{\|y-\bar{y}\|^2}{2}+\sigma F^\ast(y),
\end{split}
\end{equation*}
where $\tau,\sigma>0$ are step-size parameters we need to choose
suitably. Given the initial point
$(x^0,y^0)\in\mathcal{X}\times\mathcal{Y}$ and set $\bar{x}^0=x^0$,
the primal-dual algorithm of the saddle-point problem of~\eqref{eq:pdorig} can be written as 

\begin{equation}\label{pdgen}
\begin{cases}
&y^{l+1}=(I+\sigma\partial F^\ast)^{-1}(y^l+\sigma \mathcal{K}\bar{x}^l),\\
&x^{l+1}=(I+\tau\partial G)^{-1}(x^l-\tau \mathcal{K}^\ast y^{l+1}),\\
&\bar{x}^{l+1}=2x^{l+1}-x^l.
\end{cases}
\end{equation}

Next, we will delineate the iteration \eqref{pdgen} adapted to our problem \eqref{saddp} which can be
reformulated into the above saddle-point structure by redefining its variables and operators as
\begin{equation*}
x=(\rho,m)\in\mathcal{X}=U\times V,\quad y=(\lambda,\eta,\zeta)\in\mathcal{Y}= U\times Y\times W, \quad 
\mathcal{K}=\begin{pmatrix}
\partial_t & \div\\
(0,...,0,K) & 0 \\
(0,...,0,\nabla) & 0\\
\end{pmatrix},
\end{equation*}
as well as
\begin{equation*}
\begin{split}
& G(x)=\int_{\overline{\Omega}}\int_{0}^{1}\frac{|m|^2}{2\rho}dtdx,\\
& F^\ast(y) = \langle f,\eta\rangle+\frac{1}{2\alpha}\|\eta\|^2+\delta_{\{\|\cdot\|_\infty\leq\beta\}}(\zeta).
\end{split}
\end{equation*}
Then the primal-dual iterations are given by
\begin{equation}\label{pditer}
\left\{
\begin{aligned}
(\lambda^{l+1},\eta^{l+1},\zeta^{l+1})&=\argmax_{\lambda,\eta,\zeta} \mathcal{F}(\bar{\rho}^l,\bar{m}^l;\lambda,\eta,\zeta)-\frac{1}{2\sigma}\|\lambda-\lambda^l\|^2-\frac{1}{2\sigma}\|\eta-\eta^l\|^2-\frac{1}{2\sigma}\|\zeta-\zeta^l\|^2\\
(\rho^{l+1},m^{l+1})&=\argmin_{\rho,m} \mathcal{F}(\rho,m;\lambda^{l+1},\eta^{l+1},\zeta^{l+1})+\frac{1}{2\tau}\|\rho-\rho^l\|^2+\frac{1}{2\tau}\|m-m^l\|^2
\\
(\bar{\rho}^{l+1},\bar{m}^{l+1})& = 2(\rho^{l+1},m^{l+1})-(\rho^l,m^l),
\end{aligned}
\right.
\end{equation}
where $\sigma$ and $\tau$ are the step sizes of dual and primal variables, respectively.

For dual $(\lambda,\eta,\zeta)$ problem, it can be written as
\begin{equation*}
\begin{aligned}
\max_{\lambda,\eta,\zeta} \langle \lambda,\partial_t\bar{\rho}^l+\div \bar{m}^l\rangle-\frac{1}{2\sigma}\|\lambda-\lambda^l\|^2&+\langle\eta,K\bar{\rho_1}^l-f\rangle-\frac{1}{2\alpha}\|\eta\|^2-\frac{1}{2\sigma}\|\eta-\eta^l\|^2\\
&+\langle \zeta, \nabla\bar{\rho_1}^l\rangle-\delta_{\{\|\cdot\|_\infty\leq\beta\}}(\zeta)-\frac{1}{2\sigma}\|\zeta-\zeta^l\|^2.
\end{aligned}
\end{equation*}
Clearly, this problem is not coupled with respect to the three dual variables that therefore can be computed one by one:
\begin{equation}
\begin{cases}
\lambda^{l+1} &= \lambda^l + \sigma(\partial_t\bar{\rho}^l+\div \bar{m}^l);\\
\eta^{l+1}&=\Big(\eta^l + \sigma(K\bar{\rho_1}^l-f)\Big)/(1+\sigma/\alpha);\\
\zeta^{l+1} & = \mathcal{P}_{\beta}(\zeta^l+\sigma\nabla\bar{\rho_1}^l).
\end{cases}
\end{equation}
The corresponding projection operator $\mathcal{P}$ is given as
\begin{equation*}
t^\ast = \mathcal{P}_{\xi}(\bar{t})=\arg\min_{\|t\|_\infty\leq\xi}\frac{\|t-\bar{t}\|^2}{2}=\frac{\bar{t}}{\max(1,\frac{|\bar{t}|}{\xi})}.
\end{equation*}

For primal $(\rho,m)$ problem, it can be formulated as
\begin{equation}\label{ppro}
\begin{aligned}
\min_{\rho,m} \int_{\overline{\Omega}}\int_{0}^{1}\frac{|m|^2}{2\rho}dtdx+\langle \lambda^{l+1},\partial_t\rho+\div m\rangle&+\langle \eta^{l+1},K\rho_1-f\rangle+\langle \zeta^{l+1}, \nabla\rho_1\rangle\\&+\frac{1}{2\tau}\|\rho-\rho^l\|^2+\frac{1}{2\tau}\|m-m^l\|^2.
\end{aligned}
\end{equation}
Note that $(\rho,m)=(0,0)$ when $\rho=0$ from the definition of the energy $B(\rho,m)$. We therefore focus on the discussion to the case of $\rho>0$. Firstly, we consider the minimization problem \eqref{ppro} when $t\in(0,1)$, which, equivalently, proceeds to minimize the following by getting rid of the formulas with respect to $\rho_1$: 
\begin{equation}\label{ppro2}
\min_{\rho,m} \int_{\overline{\Omega}}\int_{0}^{1}\frac{|m|^2}{2\rho}dtdx+\langle \lambda^{l+1},\partial_t\rho+\div m\rangle+\frac{1}{2\tau}\|\rho-\rho^l\|^2+\frac{1}{2\tau}\|m-m^l\|^2.
\end{equation}
As ones can see, the functional \eqref{ppro2} is smooth and strongly convex. Thus, by the first-order optimality condition, it turns out that
\begin{equation*}
\begin{cases}
\frac{m}{\rho}+\div^\intercal\lambda^{l+1}+\frac{1}{\tau}(m-m^l)=0,\\
-\frac{|m|^2}{2\rho^2}+\partial_t^\intercal\lambda^{l+1}+\frac{1}{\tau}(\rho-\rho^l)=0.
\end{cases}
\end{equation*}
More precisely, we can obtain by reformulation 
\begin{equation*}
\begin{cases}
m=\frac{\rho\tilde{m}}{\rho+\tau},\\
(\rho+\tau)^2(\rho-\tilde{\rho})-\frac{\tau}{2}|\tilde{m}|^2=0,
\end{cases}
\end{equation*}
where $\tilde{m}=m^l-\tau\div^\intercal\lambda^{l+1}$ and $\tilde{\rho}=\rho^l-\tau\partial_t^\intercal\lambda^{l+1}$ for simplicity. Obviously, $\rho$ can be offered by solving a three-order polynomial equation, and then $m$ is exhibited explicitly. We mention that, to simplify, we set $\rho_0^{l+1}=\mu$ ($\mu$ is the template) after each iteration.
\begin{remark}
	When $t=1$, the three-order polynomial equation with respect to $\rho_1$ is written as 
	\begin{equation*}
	(\rho_1+\tau)^2(\rho_1-\tilde{\rho}_1)-\frac{\tau}{2}|\tilde{m}_1|^2=0,
	\end{equation*}
	where $\tilde{\rho}_1=\rho^l_1-\tau\big(\partial_t^\intercal\lambda_1^{l+1}+K^\intercal\eta^{l+1}+\nabla^\intercal\zeta^{l+1}\big)$. 
\end{remark}

Combining all the updating formulas, we are now ready to state the algorithm in Algorithm \ref{algo:pd}.
\begin{algorithm}[H]
	\caption{ Primal-dual method for solving the proposed model \eqref{inOT}.}
	\label{algo:pd}
	\textbf{Initialization:} Choose $\tau,\sigma>0$ and $\rho^{0},m^{0},\lambda^{0},\eta^{0},\zeta^{0}$, set $\bar{\rho}^{0}=\rho^{0},\bar{m}^{0}=m^{0}$;\\
	\textbf{Iterations:} For $l=0,1,2,\cdots$, update    \\
	1. $\lambda^{l+1} = \lambda^l + \sigma(\partial_t\bar{\rho}^l+\div \bar{m}^l)$;\\
    2. $\eta^{l+1}=\Big(\eta^l + \sigma(K\bar{\rho_1}^l-f)\Big)/(1+\sigma/\alpha)$;\\
    3. $\zeta^{l+1}  = \mathcal{P}_{\beta}(\zeta^l+\sigma\nabla\bar{\rho_1}^l)$;
	\\
	4. get $\rho^{l+1}$ by solving $(\rho+\tau)^2(\rho-\tilde{\rho})-\frac{\tau}{2}|\tilde{m}|^2=0$;\\
	5. $m^{l+1}=\frac{\rho^{l+1}\tilde{m}^l}{\rho^{l+1}+\tau}$; \\
	6. $(\bar{\rho}^{l+1},\bar{m}^{l+1}) = 2(\rho^{l+1},m^{l+1})-(\rho^l,m^l)$;\\
	\textbf{Until convergence};\\
	\textbf{Return} $(\rho^{l+1},m^{l+1})$.
\end{algorithm}

\subsection{Numerical grids}
As ones know, centered-grid-only discretization strategy is not effective for the proposed problem due to the presence of the continuity equation, since it maybe result in chessboard oscillations. Therefore, we employ the staggered grids on variable $m$ which can be found in \cite{gangbo2019unnormalized,papadakis2014optimal,papadakis2015multi}, while others are on centered grids. In  this paper, we focus on the case of $d=2$ as the same dimension of imaging problem. The space-time interval is given as $\overline{\Omega}\times T=[0,1]^2\times[0,1]$. Let $\Delta x=\frac{1}{n_x-1},\Delta y=\frac{1}{n_y-1},\Delta t=\frac{1}{n_t-1}$ be the step sizes and $n_x,n_y,n_t$ be the point numbers of the centered grids where the density $\rho$ lies on.
 As a consequence, the discrete space-time centered grids are then defined by
 \begin{equation*}
 \begin{aligned}
 \overline{\Omega}_{i,j}\times T_{k}&=\Big(i\Delta x,j\Delta y, k\Delta t\Big), \quad \text{for $i=0,...,n_x-1$,  $j=0,...,n_y-1$ and $k=0,...,n_t-1$}.
 \end{aligned}
 \end{equation*}
and the staggered grids are
 \begin{equation*}
\begin{aligned}
\overline{\Omega}_{i-\frac{1}{2},j}\times T_{k}&=\Big((i-\frac{1}{2})\Delta x,j\Delta y, k\Delta t\Big), \quad \text{for $i=0,...,n_x$,  $j=0,...,n_y-1$ and $k=0,...,n_t-1$},\\
\overline{\Omega}_{i,j-\frac{1}{2}}\times T_{k}&=\Big(i\Delta x,(j-\frac{1}{2})\Delta y, k\Delta t\Big), \quad \text{for $i=0,...,n_x-1$,  $j=0,...,n_y$ and $k=0,...,n_t-1$}.
\end{aligned}
\end{equation*}
These allow us to  design the grids for the variables, such as $\rho,\lambda$ are along at $\overline{\Omega}_{i,j}\times T_{k}$,  $\eta,\zeta$ are along at $\overline{\Omega}_{i,j}$, and the two components of $m$ (e.g., $m^x,m^y$) are along at $\overline{\Omega}_{i-\frac{1}{2},j}\times T_{k}$ and $\overline{\Omega}_{i,j-\frac{1}{2}}\times T_{k}$, respectively.

We now begin to define the corresponding gradient and divergence operator in Algorithm \ref{algo:pd}. Firstly, the discrete time based derivative is given by

\begin{equation*}
\begin{aligned}
(\partial_t \rho)_{i,j,k}=
\begin{cases}
\frac{1}{\Delta t}(\rho_{i,j,1}-\rho_{i,j,0}), & \text{if $k=0$};\\
\frac{1}{2\Delta t}(\rho_{i,j,k+1}-\rho_{i,j,k-1}), & \text{if $0<k<n_t-1$};\\
\frac{1}{\Delta t}(\rho_{i,j,n_t-1}-\rho_{i,j,n_t-2}), & \text{if $k=n_t-1$};\\
\end{cases}
\end{aligned}
\end{equation*}
and its adjoint operator corresponds to
\begin{equation*}
\begin{aligned}
(\partial_t^\intercal \lambda)_{i,j,k}=
\begin{cases}
\frac{1}{\Delta t}(-\frac{\lambda_{i,j,1}}{2}-\lambda_{i,j,0}), & \text{if $k=0$};\\
\frac{1}{\Delta t}(-\frac{\lambda_{i,j,2}}{2}+\lambda_{i,j,0}), & \text{if $k=1$};\\
\frac{1}{2\Delta t}(-\lambda_{i,j,k+1}+\lambda_{i,j,k-1}), & \text{if $1<k<n_t-2$};\\
\frac{1}{\Delta t}(-\lambda_{i,j,n_t-1}+\frac{\lambda_{i,j,n_t-3}}{2}), & \text{if $k=n_t-2$};\\
\frac{1}{\Delta t}(\lambda_{i,j,n_t-1}+\frac{\lambda_{i,j,n_t-2}}{2}), & \text{if $k=n_t-1$}.\\
\end{cases}
\end{aligned}
\end{equation*}
Meanwhile, the divergence operator  with respect to $x-$, $y-$ directions are denoted as
\begin{equation*}
(\div m)_{i,j,k} = \frac{m^x_{i+\frac{1}{2},j,k}-m^x_{i-\frac{1}{2},j,k}}{\Delta x}+\frac{m^y_{i,j+\frac{1}{2},k}-m^y_{i,j-\frac{1}{2},k}}{\Delta y}, \qquad \text{for $i=0,...,n_x-1$, $j=0,...,n_y-1$}.
\end{equation*}
Note that its adjoint operator is with the gradient form which is vector-valued. Thus, we define it as $\div^\intercal=-\begin{pmatrix}
\partial_x\\
\partial_y
\end{pmatrix}$. It then follows that $\div^\intercal \lambda =  -\begin{pmatrix}
\partial_x\lambda\\
\partial_y\lambda
\end{pmatrix}$
with
\begin{equation*}
(\partial_x\lambda)_{i-\frac{1}{2},j,k} = \begin{cases}
\frac{1}{\Delta x}(\lambda_{i,j,k}-\lambda_{i-1,j,k}), &\text{if $i=1,...,n_x-1$},\\
0, & \text{if $i=0,n_x$};
\end{cases}
\end{equation*}
and
\begin{equation*}
(\partial_y\lambda)_{i,j-\frac{1}{2},k} = \begin{cases}
\frac{1}{\Delta y}(\lambda_{i,j,k}-\lambda_{i,j-1,k}), &\text{if $j=1,...,n_y-1$},\\
0, & \text{if $j=0,n_y$}.
\end{cases}
\end{equation*}
Furthermore, the gradient operator, $\nabla  = \begin{pmatrix}\partial^+_x\\
\partial^+_y
\end{pmatrix}$, with respect to $\rho_1$ is represented as 
\begin{align*}
&(\partial^+_x\rho_1)_{i,j}=\begin{cases}
\frac{1}{\Delta x}({\rho_1}_{i+1,j}-{\rho_1}_{i,j}),&\text{if $1\leq i<n_x$},\\
0,&\text{if $i=n_x$},
\end{cases} \\
&(\partial^+_y{\rho_1})_{i,j}=\begin{cases}
\frac{1}{\Delta y}({\rho_1}_{i,j+1}-{\rho_1}_{i,j}),&\text{if $1\leq j<n_y$},\\
0, &\text{if $j=n_y$},
\end{cases}\\
\end{align*}
and the adjoint operator to $\zeta$ is denoted by $\nabla^\intercal\zeta=\partial_x^-\zeta^x+\partial_y^-\zeta^y$ where
\begin{align*}
&(\partial_x^-\zeta^x)_{i,j}=\begin{cases}
{\zeta^x}_{1,j},&\text{if $i=1$},\\
{\zeta^x}_{i,j}-{\zeta^x}_{i-1,j},&\text{if $1<i<n_x$},\\
-{\zeta^x}_{n_x-1,j},&\text{if $i=n_x$},
\end{cases} \\
&(\partial_y^-{\zeta^y})_{i,j}=\begin{cases}
{\zeta^y}_{i,1},&\text{if $j=1$},\\
{\zeta^y}_{i,j}-{\zeta^y}_{i,j-1},&\text{if $1<j<n_y$},\\
-{\zeta^y}_{i,n_y-1},&\text{if $j=n_y$}.
\end{cases}\\
\end{align*}

With above definitions and notations, we can delineate the main updatings with respect to $\rho$ and $m$ in Algorithm \ref{algo:pd}. For simplicity, the superscript $l$ will be omit. The updating of $\rho$ is given by
\begin{equation*}
\begin{aligned}
\rho_{i,j,k} =\max\{0, \text{root}&\Big(1,2\tau-\tilde{\rho}_{i,j,k},\tau^2-2\tau\tilde{\rho}_{i,j,k},\\
&-\tau^2\tilde{\rho}_{i,j,k}-\frac{\tau}{8}\big((m^x_{i+\frac{1}{2},j,k}+m^x_{i-\frac{1}{2},j,k})^2+(m^y_{i,j+\frac{1}{2},k}+m^y_{i,j-\frac{1}{2},k})^2\big)\Big)\},
\end{aligned}
\end{equation*}
where $\tilde{\rho}_{i,j,k}=\rho_{i,j,k}-\tau(\partial_t^\intercal\lambda)_{i,j,k}$. Moreover, $\text{root(a,b,c,d)}$ represents the largest real root of the equation $ax^3+bx^2+cx+d=0$. Then the updating of $m$ reads as
\begin{equation*}
m^x_{i-\frac{1}{2},j,k}=
\begin{cases}
\frac{\rho_{i,j,k}+\rho_{i-1,j,k}}{\rho_{i,j,k}+\rho_{i-1,j,k}+2\tau}\tilde{m}^x_{i-\frac{1}{2},j,k}, & \text{if $i=1,...,n_x-1$},\\
0, & \text{if $i=0,n_x$,}
\end{cases}
\end{equation*}
and
\begin{equation*}
m^y_{i,j-\frac{1}{2},k}=
\begin{cases}
\frac{\rho_{i,j,k}+\rho_{i,j-1,k}}{\rho_{i,j,k}+\rho_{i,j-1,k}+2\tau}\tilde{m}^y_{i,j-\frac{1}{2},k}, & \text{if $j=1,...,n_y-1$},\\
0, & \text{if $j=0,n_y$,}
\end{cases}
\end{equation*}
where $\tilde{m}^x_{i-\frac{1}{2},j,k}=m^x_{i-\frac{1}{2},j,k}+\tau(\partial_x\lambda)_{i-\frac{1}{2},j,k}$ and  $\tilde{m}^y_{i,j-\frac{1}{2},k}=m^y_{i,j-\frac{1}{2},k}+\tau(\partial_y\lambda)_{i,j-\frac{1}{2},k}$.

\section{Numerical experiments}
The experiments in this paper are devoted to the undersampled MRI reconstruction, which recovers an image from given incomplete Fourier data. Note that, in practice, the forward operator  $K=\mathcal{PF}$ is a composition of a sampling operator $\mathcal{P}$  and the Fourier transform $\mathcal{F}$. An example of the sampling operator is shown in Figure \ref{fig:radial}, where the white line region corresponds to the sampling points in frequency domain. Compressed sensing techniques have been shown to be effective for image reconstruction in MRI with undersampling \cite{lustig2007sparse,block2007undersampled}, i.e., where much less data is available than usually required. Particularly, we incorporate Wasserstein prior into this problem to utilize the template structure. Under the framework of Benamou-Brenier energy, the reconstructed image inherits the same mass from the template, while it needs not to preserve the same topology. In practice, the template is always chosen to suffer from some deformations compared to the reconstructed image.

We compare our results with the Zero-filling method and the classical TV regularized model. The Zero-filling sets the unsampled data points in frequency space as zeroes, and then it proceeds to act inverse Fourier transform. The TV model reads as
\begin{equation*}
\min_u\frac{1}{2}\|K u-f\|^2+\alpha\int_{\overline{\Omega}} |\nabla u|dx,
\end{equation*}
which will be abbreviated as TV for simplicity in the following. Also, our proposed model is abbreviated as Wass-TV. The sampling scheme in this paper we choose is the equispaced radial sampling
pattern; see Figure \ref{fig:radial}, where the white line region corresponds to the sampling points in frequency domain. The testing images are listed in Figure \ref{fig:testimage}. In this paper, the templates are got synthetically by acting some deformations on the ground truths. We mention that all the experiments in this paper are noise-free and the case with noise needs to be further studied.
\begin{figure}[H]
	\centering
	\begin{minipage}{0.25\textwidth}		\includegraphics[width=\textwidth]{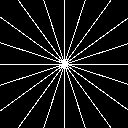}	
    \end{minipage}
\caption{\label{fig:radial}	The sampling pattern.}
\end{figure}

The quality of image recovery is measured by the peak signal to noise ratio (PSNR) and structural similarity (SSIM) that are defined as follows:
\begin{equation*}
\text{PSNR}(u,\underline{u})=10\log_{10}\frac{MN}{\|u-\underline{u}\|^2} \text{dB},
\end{equation*}
where $u$ and $\underline{u}$ are the restored and original images, respectively, and $M,N$ are the channel number and image size, respectively.
$$
\text{SSIM}(u,\underline{u})=\frac{(2\eta_u\eta_{\underline{u}}+c_1)(2\sigma_{u\underline{u}}+c_2)}{(\eta_u^2+\eta_{\underline{u}}^2+c_1)(\sigma_u^2+\sigma_{\underline{u}}^2+c_2)},
$$
where $\eta_u,\eta_{\underline{u}},\sigma_u,\sigma_{\underline{u}},\sigma_{u\underline{u}}$ are the
mean, variance and co-variance of $u$ and $\underline{u}$, and the $c_1$
and $c_2$ are small positive constants. 

To show the stability of our proposed model, we fix the model parameters in \eqref{inOT} by $\alpha=100,\beta=0.001$, and the algorithm parameters in Algorithm \ref{algo:pd} by $\tau=0.001,\sigma=0.01$ (For ``Brain'' image, $\sigma=0.002$). The  time interval is set to $\Delta t=\frac{1}{14}$, i.e., $n_t=15$ for all testings.
All the codes are implemented by MATLAB R2016b running on a desktop with Intel Core i7 CPU at 4.0 GHz and 16 GB of RAM.

\begin{figure}[!htbp]
	\centering	
	\begin{tabular}{ccccc}
		\begin{minipage}{0.01\textwidth}		
		\begin{turn}{90}Ground truth\end{turn}
		\end{minipage}&
		\begin{minipage}{0.2\textwidth}
			\includegraphics[width=\textwidth]{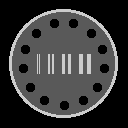}		
		\end{minipage}&
		\begin{minipage}{0.2\textwidth}
			\includegraphics[width=\textwidth]{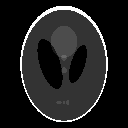}	
		\end{minipage}&
		\begin{minipage}{0.2\textwidth}
			\includegraphics[width=\textwidth]{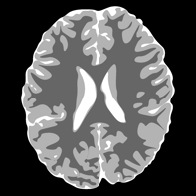}		
		\end{minipage} \\
	    \\[-0.5em]
		\begin{minipage}{0.01\textwidth}		
			\begin{turn}{90}Template ($\rho_0$) \end{turn}
		\end{minipage}&
		\begin{minipage}{0.2\textwidth}
			\includegraphics[width=\textwidth]{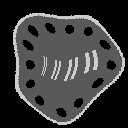}		
		\end{minipage}&
		\begin{minipage}{0.2\textwidth}
			\includegraphics[width=\textwidth]{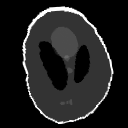}
		\end{minipage}&
    	\begin{minipage}{0.2\textwidth}
		    \includegraphics[width=\textwidth]{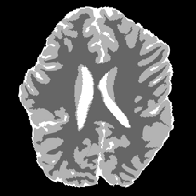}	
	    \end{minipage}\\
       & Phantom & SheppLogan & Brain
	\end{tabular}
	\caption{\label{fig:testimage}	The testing images named ``Phantom'' (128$\times$128), ``SheppLogan'' (128$\times$128) and ``Brain'' (196$\times$196), and their templates. The templates are got synthetically by acting some deformations on the ground truths.}
\end{figure}

\subsection{Experiment 1}

At first, we show the experiments on ``Phantom'' image. As shown in Figure \ref{fig:testimage}, the template ($\rho_0$) is a wrapping image, while the main structure is still preserved that can provide prior information in some sense. Figure \ref{fig:phan} exhibits the restorations of Zero-filling, TV and the proposed Wass-TV method with different sampling rates. The rates correspond to distinct numbers of spokes, e.g., 5, 10 and 15, respectively. Overall, the Zero-filling method suffers from strong artifacts and bad reconstructions, since it does not possess regularization strategy. Looking at the first row, the result of our method seems better than that of TV, where ones can tell some black circles, even it seems very hard. In the third row, the short white lines inside the object are reconstructed well in our model, while TV has some blur artifacts. Consequently, our model incorporated Wasserstein prior can handle this case better, which can further verified by the PSNR and SSIM values listed in Table \ref{table:phan}. 

Furthermore, we choose another image as the template for testing. The new template is exhibited in Figure \ref{fig:nonshape}. As ones can see, this template is distinct from the ground truth with different topology and gray values. The proposed Wass-TV model can also reconstruct this image well compared to Figure \eqref{fig:phan}, which implies that our Wasserstein prior model is not restricted by a topology-preserving template in practice. In general, this pair images are recognized as two modalities in MRI imaging, that inspires us a way to find the template.


\begin{figure}[!htbp]
	\centering	
	\begin{tabular}{cccc}
		\begin{minipage}{0.01\textwidth}		
			\begin{turn}{90} 4.29\%\end{turn}
		\end{minipage}&
		\begin{minipage}{0.2\textwidth}
			\includegraphics[width=\textwidth]{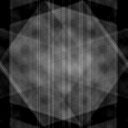}	
		\end{minipage}&
		\begin{minipage}{0.2\textwidth}
			\includegraphics[width=\textwidth]{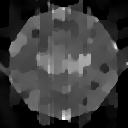}		
		\end{minipage}&
	   \begin{minipage}{0.2\textwidth}
	 	    \includegraphics[width=\textwidth]{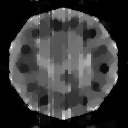}		
	   \end{minipage}\\
   \begin{minipage}{0.1\textwidth}		
   	\begin{overpic}[scale=1]{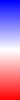}	
   				\put(20,2){\small \color{black}{$\leq -50\%$}}	
   				\put(20,90){\small \color{black}{$\geq 50\%$}}		
   			\end{overpic}
   \end{minipage}&
   \begin{minipage}{0.2\textwidth}
   	\includegraphics[width=\textwidth]{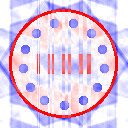}	
   \end{minipage}&
   \begin{minipage}{0.2\textwidth}
   	\includegraphics[width=\textwidth]{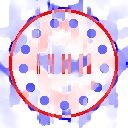}		
   \end{minipage}&
   \begin{minipage}{0.2\textwidth}
   	\includegraphics[width=\textwidth]{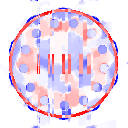}		
   \end{minipage}\\
	\begin{minipage}{0.01\textwidth}		
		\begin{turn}{90}8.48\%\end{turn}
	\end{minipage}	&
		\begin{minipage}{0.2\textwidth}
			\includegraphics[width=\textwidth]{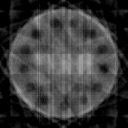}	
		\end{minipage}&
		\begin{minipage}{0.2\textwidth}
			\includegraphics[width=\textwidth]{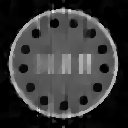}		
		\end{minipage}&
		\begin{minipage}{0.2\textwidth}
			\includegraphics[width=\textwidth]{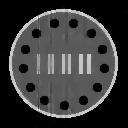}		
		\end{minipage}\\
	\begin{minipage}{0.1\textwidth}		
		\begin{overpic}[scale=1]{bluewhitered.png}	
			\put(20,2){\small \color{black}{$\leq -50\%$}}	
			\put(20,90){\small \color{black}{$\geq 50\%$}}		
		\end{overpic}
	\end{minipage}&
	\begin{minipage}{0.2\textwidth}
		\includegraphics[width=\textwidth]{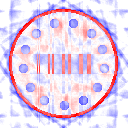}	
	\end{minipage}&
	\begin{minipage}{0.2\textwidth}
		\includegraphics[width=\textwidth]{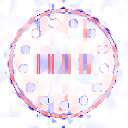}		
	\end{minipage}&
	\begin{minipage}{0.2\textwidth}
		\includegraphics[width=\textwidth]{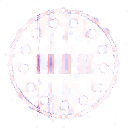}		
	\end{minipage}\\
		\begin{minipage}{0.01\textwidth}		
		\begin{turn}{90}12.68\%\end{turn}
	\end{minipage}	&
	\begin{minipage}{0.2\textwidth}
		\includegraphics[width=\textwidth]{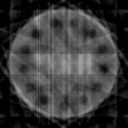}	
	\end{minipage}&
	\begin{minipage}{0.2\textwidth}
		\includegraphics[width=\textwidth]{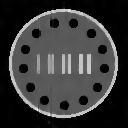}		
	\end{minipage}&
	\begin{minipage}{0.2\textwidth}
		\includegraphics[width=\textwidth]{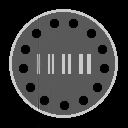}		
	\end{minipage}\\
\begin{minipage}{0.1\textwidth}		
	\begin{overpic}[scale=1]{bluewhitered.png}	
		\put(20,2){\small \color{black}{$\leq -50\%$}}	
		\put(20,90){\small \color{black}{$\geq 50\%$}}		
	\end{overpic}
\end{minipage}&
\begin{minipage}{0.2\textwidth}
	\includegraphics[width=\textwidth]{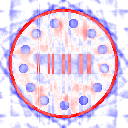}	
\end{minipage}&
\begin{minipage}{0.2\textwidth}
	\includegraphics[width=\textwidth]{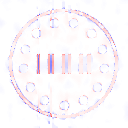}		
\end{minipage}&
\begin{minipage}{0.2\textwidth}
	\includegraphics[width=\textwidth]{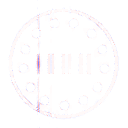}		
\end{minipage}\\
		& Zero-filling & TV & Wass-TV ($\rho_1$)
	\end{tabular}
	\caption{\label{fig:phan}The restorations of Zero-filling, TV and the proposed Wass-TV method on image ``Phantom''. The sampling rates of the first, third and fifth rows are 4.29\%, 8.48\% and 12.68\%, respectively. The even rows exhibits the differences to the ground truth~(blue indicates over and red under estimation).}
\end{figure}

\begin{table}[!htbp]
	\center{}
	\begin{tabular}{|c|ccc|ccc|}
		\hline
		\hline
		\multirow{2}*{sampling rate}&\multicolumn{3}{|c|}{PSNR (dB)} &  \multicolumn{3}{|c|}{SSIM}\\
		\cline{2-7}
		 & Zero-filling & TV & Wass-TV &  Zero-filling & TV & Wass-TV\\
		\hline
		4.29\% (5) & 15.40 & 15.48 & \textbf{17.26} & 0.2497 & 0.3163 & \textbf{0.5344} \\
		8.48\% (10)& 16.90 & 22.02 & \textbf{30.05} & 0.3173 & 0.6918 & \textbf{0.9441} \\
		12.68\% (15)& 18.02 & 28.70 & \textbf{37.60} & 0.3695 & 0.9181 & \textbf{0.9924} \\
		\hline
		\hline	
	\end{tabular}
	\caption{\label{table:phan} PSNR and SSIM values of Zero-filling, TV and our proposed method with different sampling rates on ``Phantom'' image. The numbers in the brackets~(the first column) are the numbers of sampling spokes.}
\end{table}

\begin{figure}[!htbp]
	\centering	
	\begin{tabular}{cccc}
		\begin{minipage}{0.2\textwidth}
			\includegraphics[width=\textwidth]{phan1.png}		
		\end{minipage}&
		\begin{minipage}{0.2\textwidth}
			\includegraphics[width=\textwidth]{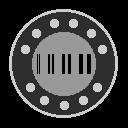}
		\end{minipage}&
		\begin{minipage}{0.2\textwidth}
			\includegraphics[width=\textwidth]{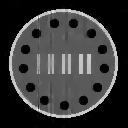}	
		\end{minipage}\\
		Phantom & Template & Wass-TV
	\end{tabular}
	\caption{\label{fig:nonshape}	The ``Phantom'' image reconstruction based on another modality template with 10 spokes (8.48\% sampling rate). Wass-TV: PSNR is 29.94; SSIM is 0.9421.}
\end{figure}

\subsection{Experiment 2}

\begin{figure}[!htbp]
	\centering	
	\begin{tabular}{cccc}
		\begin{minipage}{0.01\textwidth}		
			\begin{turn}{90} 4.29\%\end{turn}
		\end{minipage}&
		\begin{minipage}{0.2\textwidth}
			\includegraphics[width=\textwidth]{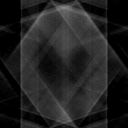}	
		\end{minipage}&
		\begin{minipage}{0.2\textwidth}
			\includegraphics[width=\textwidth]{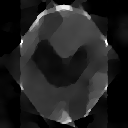}		
		\end{minipage}&
		\begin{minipage}{0.2\textwidth}
			\includegraphics[width=\textwidth]{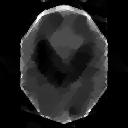}		
		\end{minipage}\\
	 \begin{minipage}{0.1\textwidth}		
		\begin{overpic}[scale=1]{bluewhitered.png}	
			\put(20,2){\small \color{black}{$\leq -50\%$}}	
			\put(20,90){\small \color{black}{$\geq 50\%$}}		
		\end{overpic}
	\end{minipage}&
	\begin{minipage}{0.2\textwidth}
		\includegraphics[width=\textwidth]{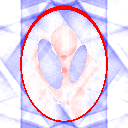}	
	\end{minipage}&
	\begin{minipage}{0.2\textwidth}
		\includegraphics[width=\textwidth]{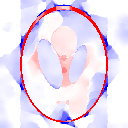}		
	\end{minipage}&
	\begin{minipage}{0.2\textwidth}
		\includegraphics[width=\textwidth]{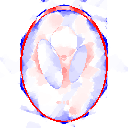}		
	\end{minipage}\\
		\begin{minipage}{0.01\textwidth}		
			\begin{turn}{90}8.48\%\end{turn}
		\end{minipage}	&
		\begin{minipage}{0.2\textwidth}
			\includegraphics[width=\textwidth]{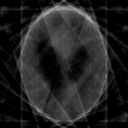}	
		\end{minipage}&
		\begin{minipage}{0.2\textwidth}
			\includegraphics[width=\textwidth]{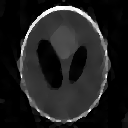}		
		\end{minipage}&
		\begin{minipage}{0.2\textwidth}
			\includegraphics[width=\textwidth]{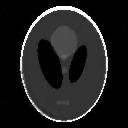}		
		\end{minipage}\\
	 \begin{minipage}{0.1\textwidth}		
		\begin{overpic}[scale=1]{bluewhitered.png}	
			\put(20,2){\small \color{black}{$\leq -50\%$}}	
			\put(20,90){\small \color{black}{$\geq 50\%$}}		
		\end{overpic}
	\end{minipage}&
	\begin{minipage}{0.2\textwidth}
		\includegraphics[width=\textwidth]{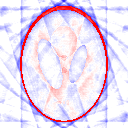}	
	\end{minipage}&
	\begin{minipage}{0.2\textwidth}
		\includegraphics[width=\textwidth]{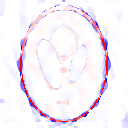}		
	\end{minipage}&
	\begin{minipage}{0.2\textwidth}
		\includegraphics[width=\textwidth]{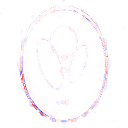}		
	\end{minipage}\\
		\begin{minipage}{0.01\textwidth}		
			\begin{turn}{90}12.68\%\end{turn}
		\end{minipage}	&
		\begin{minipage}{0.2\textwidth}
			\includegraphics[width=\textwidth]{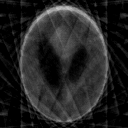}	
		\end{minipage}&
		\begin{minipage}{0.2\textwidth}
			\includegraphics[width=\textwidth]{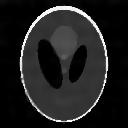}		
		\end{minipage}&
		\begin{minipage}{0.2\textwidth}
			\includegraphics[width=\textwidth]{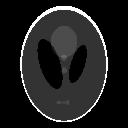}		
		\end{minipage}\\
	 \begin{minipage}{0.1\textwidth}		
		\begin{overpic}[scale=1]{bluewhitered.png}	
			\put(20,2){\small \color{black}{$\leq -50\%$}}	
			\put(20,90){\small \color{black}{$\geq 50\%$}}		
		\end{overpic}
	\end{minipage}&
	\begin{minipage}{0.2\textwidth}
		\includegraphics[width=\textwidth]{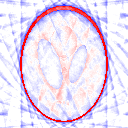}	
	\end{minipage}&
	\begin{minipage}{0.2\textwidth}
		\includegraphics[width=\textwidth]{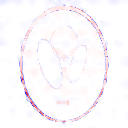}		
	\end{minipage}&
	\begin{minipage}{0.2\textwidth}
		\includegraphics[width=\textwidth]{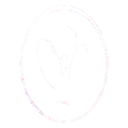}		
	\end{minipage}\\
		& Zero-filling & TV & Wass-TV ($\rho_1$)
	\end{tabular}
	\caption{\label{fig:shepp}The restorations of Zero-filling, TV and the proposed Wass-TV method on image ``SheppLogan''. The sampling rates of the first, second and third rows are 4.29\%, 8.48\% and 12.68\%, respectively. The even rows exhibits the differences to the ground truth~(blue indicates over and red under estimation).}
\end{figure}

Figure \ref{fig:shepp} exhibits the reconstruction results on ``SheppLogan'' image. As one can see, under sampling rates 8.48\% and 12.68\%, although the TV model can recover this image, it, in some sense, loses main objects, such as the edges and some small particles inside the image. These shortcomings can be further verified by the Blue-White-Red error maps where the edge errors of TV are obvious. In contrast, compared to the ground truth, images constructed by our optimal transport based approach seem better with less artifacts, and the above characteristics, such as edges and small particles on the bottom, can be preserved. These imply that ones can improve the reconstruction by incorporating prior information. In addition, the PSNR and SSIM are exhibited in Table \ref{table:shepp}. The values of the proposed Wass-TV method outperform TV, especially for the SSIM values in lower sampling rate.

\begin{table}[!htbp]
	\center{}
	\begin{tabular}{|c|ccc|ccc|}
		\hline
		\hline
		\multirow{2}*{sampling rate}&\multicolumn{3}{|c|}{PSNR (dB)} &  \multicolumn{3}{|c|}{SSIM}\\
		\cline{2-7}
		& Zero-filling & TV & Wass-TV &  Zero-filling & TV & Wass-TV\\
		\hline
		4.29\% (5) & 15.40 & 15.68 & \textbf{17.55} & 0.2989 & 0.3636 & \textbf{0.6218} \\
		8.48\% (10)& 16.61 & 20.80 & \textbf{30.74} & 0.3158 & 0.6927 & \textbf{0.9748} \\
		12.68\% (15)& 17.32 & 28.67 & \textbf{43.54} & 0.3235 & 0.8658 & \textbf{0.9979} \\
		\hline
		\hline	
	\end{tabular}
	\caption{\label{table:shepp} PSNR and SSIM values of Zero-filling, TV and our proposed method with different sampling rates on ``SheppLogan'' image. The numbers in the brackets~(the first column) are the numbers of sampling spokes.}
\end{table}

\subsection{Experiment 3}
Finally, we discuss the experiments on the ``Brain'' image. From the visual effect, the reconstructions of TV are very similar to that of our method, see Figure \ref{fig:brain}, whose structures are recovered well. Particularly, comparing the images carefully at the third and fifth rows, one can observe that the Wass-TV method can smear some artifacts and sharpen the edge structures. This can be verified by the error maps where the corresponding values of our proposed model tend to zero (white color) in most of regions. The PSNR and SSIM exhibited in Table \ref{table:brain} support the observation. 
\begin{figure}[!htbp]
	\centering	
	\begin{tabular}{cccc}
		\begin{minipage}{0.01\textwidth}		
			\begin{turn}{90} 5.58\%\end{turn}
		\end{minipage}&
		\begin{minipage}{0.2\textwidth}
			\includegraphics[width=\textwidth]{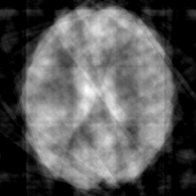}	
		\end{minipage}&
		\begin{minipage}{0.2\textwidth}
			\includegraphics[width=\textwidth]{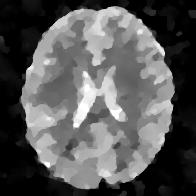}		
		\end{minipage}&
		\begin{minipage}{0.2\textwidth}
			\includegraphics[width=\textwidth]{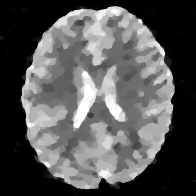}		
		\end{minipage}\\
	\begin{minipage}{0.1\textwidth}		
		\begin{overpic}[scale=1]{bluewhitered.png}	
			\put(20,2){\small \color{black}{$\leq -50\%$}}	
			\put(20,90){\small \color{black}{$\geq 50\%$}}		
		\end{overpic}
	\end{minipage}&
	\begin{minipage}{0.2\textwidth}
		\includegraphics[width=\textwidth]{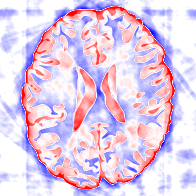}	
	\end{minipage}&
	\begin{minipage}{0.2\textwidth}
		\includegraphics[width=\textwidth]{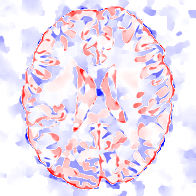}		
	\end{minipage}&
	\begin{minipage}{0.2\textwidth}
		\includegraphics[width=\textwidth]{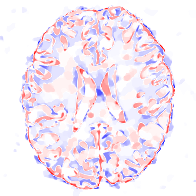}		
	\end{minipage}\\
		\begin{minipage}{0.01\textwidth}		
			\begin{turn}{90}11.40\%\end{turn}
		\end{minipage}	&
		\begin{minipage}{0.2\textwidth}
			\includegraphics[width=\textwidth]{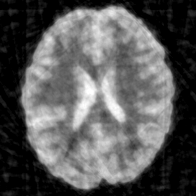}	
		\end{minipage}&
		\begin{minipage}{0.2\textwidth}
			\includegraphics[width=\textwidth]{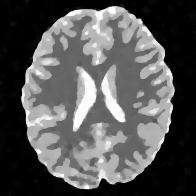}		
		\end{minipage}&
		\begin{minipage}{0.2\textwidth}
			\includegraphics[width=\textwidth]{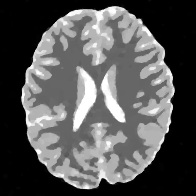}		
		\end{minipage}\\
	\begin{minipage}{0.1\textwidth}		
		\begin{overpic}[scale=1]{bluewhitered.png}	
			\put(20,2){\small \color{black}{$\leq -50\%$}}	
			\put(20,90){\small \color{black}{$\geq 50\%$}}		
		\end{overpic}
	\end{minipage}&
	\begin{minipage}{0.2\textwidth}
		\includegraphics[width=\textwidth]{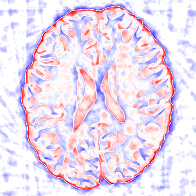}	
	\end{minipage}&
	\begin{minipage}{0.2\textwidth}
		\includegraphics[width=\textwidth]{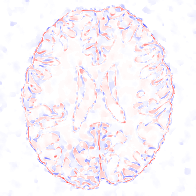}		
	\end{minipage}&
	\begin{minipage}{0.2\textwidth}
		\includegraphics[width=\textwidth]{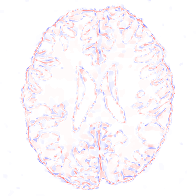}		
	\end{minipage}\\
		\begin{minipage}{0.01\textwidth}		
			\begin{turn}{90}16.37\%\end{turn}
		\end{minipage}	&
		\begin{minipage}{0.2\textwidth}
			\includegraphics[width=\textwidth]{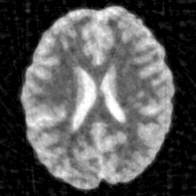}	
		\end{minipage}&
		\begin{minipage}{0.2\textwidth}
			\includegraphics[width=\textwidth]{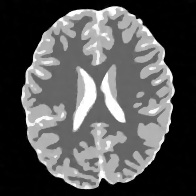}		
		\end{minipage}&
		\begin{minipage}{0.2\textwidth}
			\includegraphics[width=\textwidth]{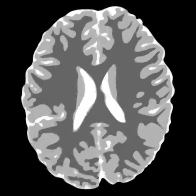}		
		\end{minipage}\\
	\begin{minipage}{0.1\textwidth}		
		\begin{overpic}[scale=1]{bluewhitered.png}	
			\put(20,2){\small \color{black}{$\leq -50\%$}}	
			\put(20,90){\small \color{black}{$\geq 50\%$}}		
		\end{overpic}
	\end{minipage}&
	\begin{minipage}{0.2\textwidth}
		\includegraphics[width=\textwidth]{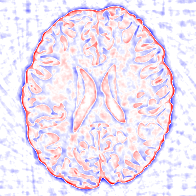}	
	\end{minipage}&
	\begin{minipage}{0.2\textwidth}
		\includegraphics[width=\textwidth]{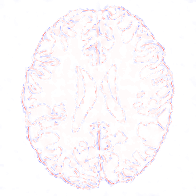}		
	\end{minipage}&
	\begin{minipage}{0.2\textwidth}
		\includegraphics[width=\textwidth]{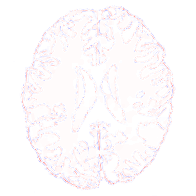}		
	\end{minipage}\\
		& Zero-filling & TV & Wass-TV ($\rho_1$)
	\end{tabular}
	\caption{\label{fig:brain}The restorations of Zero-filling, TV and the proposed Wass-TV method on image ``Brain''. The sampling rates of the first, second and third rows are 5.58\%, 11.40\% and 16.37\%, respectively. The even rows exhibits the differences to the ground truth~(blue indicates over and red under estimation).}
\end{figure}

\begin{table}[!htbp]
	\center{}
	\begin{tabular}{|c|ccc|ccc|}
		\hline
		\hline
		\multirow{2}*{sampling rate}&\multicolumn{3}{|c|}{PSNR (dB)} &  \multicolumn{3}{|c|}{SSIM}\\
		\cline{2-7}
		& Zero-filling & TV & Wass-TV &  Zero-filling & TV & Wass-TV\\
		\hline
		5.58\% (10) & 16.67 & 20.03 & \textbf{22.31} & 0.2813 & 0.4571 & \textbf{0.7407} \\
	    11.40\% (20)& 19.29 & 27.54 & \textbf{29.74} & 0.3845 & 0.8215 & \textbf{0.9451} \\
		16.37\% (30)& 21.14 & 32.29 & \textbf{34.92} & 0.4646 & 0.9595 & \textbf{0.9865} \\
		\hline
		\hline	
	\end{tabular}
	\caption{\label{table:brain} PSNR and SSIM values of Zero-filling, TV and our proposed method with different sampling rates on ``Brain'' image. The numbers in the brackets~(the first column) are the numbers of sampling spokes.}
\end{table}

\section{Conclusion}
In this paper, we propose a variational model solving linear inverse problems based on Wasserstein distance and total variation. The Benamou-Brenier energy is employed, and the given initial state image can be regarded as a template which can provide prior information. The theoretical analysis of the existence of solutions is discussed in measure space. Moreover, we employ the first-order primal-dual method to solve our problem and show some numerical experiments on undersampled MRI reconstruction. Such optimal transport based method can improve the reconstruction quality regarding the objective criterion and visual effect. In the future, we intend to extend this approach to multi-modality medical image reconstruction with unbalanced optimal transport energy, and to consider specific priors on momentum $m$ from physics.

\section*{Acknowledgments}
Yiming Gao is supported by Natural Science Foundation of Jiangsu Province (No. BK20220864).

\bibliographystyle{abbrv}
\bibliography{reference_abb}

\end{document}